\definecolor{verylight}{gray}{0.97}
\definecolor{light}{gray}{0.9}
\definecolor{medium}{gray}{0.85}
\def\NZQ{\Bbb}               % the font for N,Z,Q,R,C
\def\NN{{\NZQ N}}
\def\ZZ{{\NZQ Z}}
\def\frk{\frak}               % font for "Fraktur"
\def\Phi{{\frk n}}
\def\Phi{{\frk N}}
\def\opn#1#2{\def#1{\operatorname{#2}}} % to make operators
\opn\chara{char} \opn\length{\ell} \opn\pd{pd} \opn\rk{rk}
\opn\projdim{proj\,dim} \opn\injdim{inj\,dim} \opn\rank{rank}
\opn\depth{depth} \opn\grade{grade} \opn\height{height}
\opn\embdim{emb\,dim} \opn\codim{codim} \opn\dim{dim}
\opn\sdepth{sdepth} \opn\sqdepth{sqdepth}
\opn\Tr{Tr} \opn\bigrank{big\,rank}
\opn\superheight{superheight}\opn\lcm{lcm}
\opn\trdeg{tr\,deg}%
\opn\reg{reg} \opn\lreg{lreg} \opn\ini{in} \opn\Syz{Syz}
\opn\sq{sq}
\opn\div{div} \opn\Div{Div} \opn\cl{cl} \opn\Cl{Cl}
\opn\Spec{Spec} \opn\Supp{Supp} \opn\supp{supp} \opn\Sing{Sing}
\opn\Ass{Ass}  \opn\Min{Min} \opn\max{max}
\opn\Ann{Ann} \opn\Rad{Rad} \opn\Soc{Soc}
\opn\Ker{Ker} \opn\Coker{Coker} \opn\Am{Am} \opn\Hom{Hom}
\opn\Tor{Tor} \opn\Ext{Ext} \opn\End{End} \opn\Aut{Aut}
\opn\id{id}  \opn\e{e} \opn\hreg{hreg} \opn\dim{dim}
\opn\nat{nat} \opn\deg{deg} \opn\adeg{adeg} \opn\Hilb{Hilb}
\opn\pff{pf}%   \pf exists already
\opn\Pf{Pf} \opn\GL{GL} \opn\SL{SL} \opn\mod{mod} \opn\ord{ord}
\opn\aff{aff} \opn\con{conv} \opn\relint{relint} \opn\st{st}
\opn\lk{lk} \opn\cn{cn} \opn\core{core} \opn\vol{vol}
\opn\link{link}  \opn\infpt{infpt} \opn\mult{mult} \opn\Sqp{Sqp}
\opn\gr{gr} \opn\Std{Std}
\def\pot#1#2{#1[\kern-0.28ex[#2]\kern-0.28ex]}
\opn\dirlim{\underrightarrow{\lim}}
\opn\inivlim{\underleftarrow{\lim}}
\let\union=\cup
\let\sect=\cap
\let\dirsum=\oplus
\let\iso=\cong
\let\Union=\bigcup
\let\Dirsum=\bigoplus
\let\to=\rightarrow
\def\Implies{\ifmmode\Longrightarrow \else
     \unskip${}\Longrightarrow{}$\ignorespaces\fi}
\def\implies{\ifmmode\Rightarrow \else
     \unskip${}\Rightarrow{}$\ignorespaces\fi}
\def\iff{\ifmmode\Longleftrightarrow \else
     \unskip${}\Longleftrightarrow{}$\ignorespaces\fi}
\newtheorem{Theorem}{Theorem}[section]
\newtheorem{Lemma}[Theorem]{Lemma}
\newtheorem{Corollary}[Theorem]{Corollary}
\newtheorem{Proposition}[Theorem]{Proposition}
\newtheorem{Remark}[Theorem]{Remark}
\newtheorem{Example}[Theorem]{Example}
\newtheorem{Definition}[Theorem]{Definition}
\newtheorem{Conjecture}[Theorem]{Conjecture}
\let\epsilon\varepsilon
\let\phi=\varphi
\let\kappa=\varkappa
\opn\dis{dis}
\def\pnt{{\raise0.5mm\hbox{\large\bf.}}}
\begin{document}

\title{Stanley decompositions of squarefree modules and Alexander duality }
\author{Ali Soleyman Jahan}
\address{Ali Soleyman Jahan, Fachbereich Mathematik und
Informatik, Universit\"at Duisburg-Essen, 45117 Essen, Germany}
\email{ali.soleyman-jahan@stud.uni-duisburg- essen.de}

\date{}
\begin{abstract} In this paper we study how  prime filtrations and
 squarefree Stanley decompositions of  squarefree modules over
the polynomial ring and the exterior algebra behave with respect
to Alexander duality.
\end{abstract}

 \maketitle

\section*{Introduction} Let $K$ be a field and $S=K[x_1,\ldots,x_n]$ the
polynomial ring in $n$ variables. The ring $S$ is naturally
$\NN^n$-graded. Yanagawa \cite{Y} introduced
squarefree $S$-modules which generalizes the concept of
Stanley--Reisner rings. A finitely generated $\NN^n$-graded
$S$-module $M=\Dirsum_{\mathbf a\in\NN^n}M_\mathbf{a}$ is {\em
squarefree} if the multiplication map $M_\mathbf{a}\to M_{\mathbf
a+\epsilon_i}$, $m\mapsto mx_i$, is bijective for all $\mathbf
a\in\NN^n$ and all $i\in\supp(\mathbf a)$.
R\"{o}mer defined in \cite{R} the Alexander dual $M^\vee$ for a
squarefree $S$-module $M$. The definition refers to exterior
algebras. Let $E$ be the exterior algebra over an $n$-dimensional
$K$-vector space $V$. A finitely generated $\NN^n$-graded
$E$-module $N=\Dirsum_{\mathbf a\in\NN^n}N_\mathbf{a}$ is called
{\em squarefree} if it has only squarefree components. By
\cite[Corollary 1.6]{R}
 the category of squarefree
$S$-modules is equivalent to the category of squarefree
$E$-modules. For an $\NN^n$-graded $E$-module $N$ the $E$-dual of
$N$ is the graded dual $N^\vee=\Hom_E(N,E)$. Let $M$ be a
squarefree $S$-module and $N$ its corresponding  squarefree
$E$-module. Then $M^\vee$ is defined to be the squarefree
$S$-module corresponding to $N^\vee$. In the first section of this
paper we recall some basic notion and definitions about squarefree
$S$-modules and $E$-modules. In Section 2 we study prime
filtrations of squarefree $S$-modules and $E$-modules. As a main
result of this section we prove that for a squarefree $S$-module
$M$ there exists a chain $ 0\subset M_1\subset\cdots\subset M_r=M$
of squarefree submodules of $M$ with $M_i/M_{i-1}\iso
S/P_{F_i}(-G_i)$ if and only if there exists a chain $0\subset
L_1\subset \cdots\subset L_r=M^\vee$ of  squarefree submodules of
$M^\vee$ with $L_i/ L_{i-1}\iso S/P_{G_i}(-F_i)$, see Theorem
\ref{psdual}. For proving this, in Proposition \ref{pedual} we
show that the corresponding result is true for squarefree
$E$-modules. In Corollary \ref{J/I} we show explicitly how the
prime filtration of $M^\vee$ is obtained form that of $M$, in the
special case that $M=J/I$, where $I\subset J$ are squarefree
monomial ideals.

In last section we study Stanley decompositions of finitely
generated $\ZZ^n$-graded $S$-modules. Let $m\in M$ be a
homogeneous element and $Z\subset\{x_1,\ldots,x_n\}=X$. We denote
by $mK[Z]$ the $K$-subspace of $M$ generated by all homogeneous
elements of the form $mu$, where $u$ is a monomial in $K[Z]$. The
$K$-subspace $mK[Z]$ is called a {\em Stanley space of dimension
$|Z|$} if $mu\neq 0$ for all nonzero monomial $u\in K[Z]$. Here
$|Z|$ denote the cardinality of $Z$. A homogeneous element $m\in
M$ is called squarefree if $\deg(m)=(a_1,\ldots,a_n)\in
\{0,1\}^n$.
The Stanley space $mK[Z]$ is called {\em squarefree}
if $m$ is a squarefree homogeneous element and
$\supp(\deg(m))=\{j\:a_j\neq 0\} \subset\{i\:x_i\in Z\}$.

A decomposition $\mathcal D$ of $M$ as a finite direct sum of
Stanley spaces is called a {\em Stanley decomposition} of $M$. The
Stanley decomposition $\mathcal D$ of $M$  is called {\em
squarefree Stanley decomposition} if all Stanley spaces in
$\mathcal D$ are squarefree Stanley spaces. In Proposition
\ref{squarefree} we show that the $R$-module $M$ has a squarefree
Stanley decomposition if and only if $M$ is squarefree $R$-module.
The minimal dimension of a Stanley space in the decomposition
$\mathcal D$ is called the {\em Stanley depth} of $\mathcal D$,
denoted by $\sdepth ({\mathcal D})$. We set
$$\sdepth(M)=\max\{\sdepth({\mathcal D})\: {\mathcal D}\; \text{is a
Stanley decomposition of $M$}\}, $$ and call this number the {\em
Stanley depth} of $M$. For a squarefree module $M$ we denote by
$$\sqdepth(M)=\max\{\sdepth({\mathcal D})\: {\mathcal D} \text{ is
a squarefree Stanley decomposition of $M$}\}$$ the {\em squarefree
Stanley depth} of $M$. If $M$ is squarefree, then
$\sqdepth(M)=\sdepth(M)$, see Theorem ~\ref{sqdepth}.

As a main result of this section we show that a squarefree
$S$-module $M$ has a squarefree Stanley decomposition
$M=\Dirsum_{i=1}^tm_iK[Z_i]$  if and only if there exist a
squarefree Stanley decomposition $M^\vee=\Dirsum_{i=1}^tv_iK[W_i]$
of $M^\vee$ with $\supp(v_i)=[n]\setminus\{j\:x_j\in Z_i\}$ and
$W_i=\{x_j\:j\in[n]\setminus\supp(m_i)\}$, see Theorem \ref{dual}.
To prove this we show in Proposition \ref{Edual} that the
correspomding result is true for squarefree $E$-modules. As
corollaries of Theorem \ref{dual} we show that Stanley's
conjecture on Stanley decompositions holds for a squarefree
$S$-module $M$ if and only if $M^\vee$ has a Stanley decomposition
$M^\vee=\Dirsum_{i=1}^tv_iK[W_i]$ with $|v_i|\leq\reg(M^\vee)$ for
all $i$, see Corollary \ref{1}, and Stanley's conjecture on
partitionable simplicial complexes holds for a Cohen--Macaulay
simplicial complex $\Delta$ if and only if $I_{\Delta^\vee}$ has a
Stanley decomposition $I_{\Delta^\vee}=\Dirsum_{i=1}^tu_iK[Z_i]$
such that $\{u_i,\ldots,u_t\}=G(I_{\Delta^\vee})$.

Due to these facts we conjecture (Conjecture \ref{reg}) that any
$\ZZ^n$-graded $S$-module $M$ has a Stanley decomposition
$M=\Dirsum_{i=1}^tm_iK[Z_i]$ with $|m_i|\leq\reg(M)$. In some
cases we can show that this conjecture holds.

\subsection*{Acknowledgements} I am grateful to Professor
J\"{u}rgen Herzog for his suggestion to  consider  Stanley
decompositions of a squarefree $S$-module and their Alexander
dual, and for useful discussions and comments.

\section{squarefree modules and Alexander dual}

We fix some notation and recall some definitions.  For $\mathbf
a=(a_1,\ldots,a_n)\in\ZZ^n$, we say $\mathbf a$ is squarefree  if
$a_i=0$ or $a_i=1$ for $i=1,\ldots,n$. We set $\supp(\mathbf
a)=\{i\: a_i\neq 0\}\subset[n]=\{1,\ldots,n\}$ and $|\mathbf
a|=\sum_{i=1}^na_i$. Occasionally we identify  a squarefree vector
$\mathbf a$ with $\supp(\mathbf a)$. Let
${\epsilon}_i=(0\ldots,1,\ldots,0)\in\NN^n$ be the vector with $1$
at the $i$-th position. Let $M=\Dirsum_{\mathbf
a\in\ZZ^n}M_\mathbf{a}$ be an $\ZZ^n$-graded $K$-vector space. For
simplicity  set $\supp(m)=\supp(\deg m) $ and $|m|=|\deg m|$ for
any homogeneous element $m\in M$ . A homogeneous element $m\in M$
is called {\em squarefree} if $\deg m\in\{0,1\}^n$.

Let $K$ be a field and $S=K[x_1,\ldots,x_n]$ the symmetric algebra
over $K$. Consider the natural $\NN^n$-grading on $S$. For a
monomial $x_1^{a_1}\cdots x_n^{a_n}$ with $\mathbf
a=(a_1,\ldots,a_n)$ we set $x^{\mathbf a}$, and for $F\subset [n]$
we denote $x_F=\prod_{j\in F}x_j$.

Let $V$ be an $n$-dimensional $K$-vector space  with  basis
$e_1,\ldots,e_n$. We denote by $E=K\langle e_1,\ldots,e_n\rangle$
the exterior algebra over $V$. The algebra $E$ is a naturally
$\NN^n$-graded $K$-algebra with $\deg e_i=\epsilon_i$. Let
$F=\{j_1<j_2<\ldots<j_k\}\subset[n]$. Then $e_F=e_{j_1}\wedge
e_{j_2}\wedge\ldots\wedge e_{j_k}$ is called a monomial in $E$. It
is easy to see that the elements $e_F$, with $F\subset [n]$ form a
$K$-basis of $E$. Here we set $e_F=1$, if $F=\emptyset$. For  any
$\mathbf a\in\NN^n$ we set $e_{\mathbf a}=e_{\supp(\mathbf a)}$.

 A finite dimensional  $K$-vector
space $M$ is called an $\ZZ^n$-graded $E$-module, if
\begin{enumerate}
\item[(i)] $M=\Dirsum_{\mathbf a\in \NN^n}M_{\mathbf a}$ is a direct sum of $K$-vector spaces
$M_\mathbf{a}$;
\item[(ii)] $M$ is an $(E-E)$-bimodule;
\item[(iii)] for all vectors $\mathbf a$ and $\mathbf b$ in $\ZZ^n$ and all $f\in E_\mathbf{a}$ and
$m\in M_\mathbf{b}$ one has $fm\in M_{\mathbf a+\mathbf b}$ and
$fm=(-1)^{|\mathbf a||\mathbf b|}mf$.
\end{enumerate}

A simplicial complex $\Delta$ is a collection of subset of
$[n]=\{1,\ldots,n\}$ such that whenever  $F\in \Delta$ and  $G\subset F$, then $G\in\Delta$.
Further we denote by $\Delta^\vee=\{F\: F^c\not\in\Delta\}$ the
Alexander dual of $\Delta$, where $F^c=[n]\setminus F$. Then
$K[\Delta]=S/I_\Delta$ is called the Stanley--Reisner ring, where
$I_\Delta=(x_{i_1}\cdots x_{i_k}\:
\{i_1,\ldots,i_k\}\not\in\Delta)$, and $K\{\Delta\}=E/J_\Delta$
is called the exterior face ring of $\Delta$, where
$J_\Delta=(e_{i_1}\wedge\cdots \wedge e_{i_k}\:
\{i_1,\ldots,i_k\}\not\in\Delta)$ .

The following definition is due to Yanagawa  \cite{Y}.

\begin{Definition} A finitely generated $\NN^n$-graded $S$-module
$M=\Dirsum_{\mathbf a\in\NN^n}M_\mathbf{a}$ is {\em squarefree} if
the multiplication map $M_\mathbf{a}\to M_{\mathbf a+\epsilon_i}$,
$m\mapsto mx_i$, is bijective for all $\mathbf a\in\NN^n$ and all
$i\in\supp(\mathbf a)$.
\end{Definition}

For examples the Stanley-Reisner ring $K[\Delta]$ of a simplicial
complex $\Delta$ is a squarefree $S$-module. If $I\subset J$ are
squarefree monomial ideals, then $I$, $S/I$ and  $J/I$ are
squarefree $S$-modules. The following example shows that the
factor module  $J/I$ may be a squarefree $\NN^n$-graded
$S$-module, even though  $I$ and $J$ are not squarefree monomial
ideals.

\begin{Example}\label{good}{\em Let $I=(x^2,xy)\subset J=(x^2,xy,yz)$ be
monomial ideals in $K[x,y,z]$. Then an element $u\in J\setminus I$
if and only if $u=(yz)v$ for some $v\in K[y,z]$. Hence $J/I$ is a
squarefree $\NN^n$-graded $S$-module.  But if we choose
$I'=(x^2,yz)\subset J=(x^2,xy,yz)\subset K[x,y,z]$, then $xy\in
J\setminus I$ and $x(xy)=x^2y\in I'$. Therefore $J/I'$ is not a
squarefree $\NN^n$-graded $S$-module. }
\end{Example}

Since $\dim_K (J/ I)_{\mathbf{a}}\leq 1$ for all
$\mathbf{a}\in\NN^n$, the $\NN^n$-graded $S$-module $J/I$ is
squarefree if and only if the multiplication map
$$ (J/I)_{\mathbf{a}}\to (J/I)_{\mathbf{a}+\epsilon_i},\; m\to x_im$$ is
injective  for all $i\in\supp(m)$ and all $\mathbf{a}\in\NN^n$.
\begin{Remark} {\em Let $I\subset J\subset S$ be two monomial
ideals. The $\NN^n$-graded $S$-module $J/I$ is squarefree if and
only all minimal monomial generators of $J/I$ are squarefree
monomials and $\supp(u)\not\subset \supp(m)$ for all $m\in
J\setminus I$ and all $u\in G(I)$ where $G(I)$ denote the set of
minimal monomial generators of $I$.  Indeed let $J/I$ be a
squarefree $S$-module and one of the minimal generators of $J/I$
is not sqaurefree, say $m\in J\setminus I$. We may assume that
$x_1^2\mid m$ and $\deg(m)=\mathbf{a}$. Then
$m'=m/x_1\in(J/I)_{\mathbf{a}-\epsilon_i}$ is a zero element and
$1\in\supp(m')$ but $m=x_1m'\in (J/I)_{\mathbf{a}}$ is a nonzero
element, a contradiction. Also if  there exists a monomial $m\in
J\setminus I$ and there exists a  monomial $u\in G(I)$ such that
$\supp(u)\subset\supp(m)$. Then in this case one can find a
minimal monoial  $m'=mx^{{\mathbf a}}$ (with respect to
divisibility) such that $\supp({\mathbf a})\subset\supp(m)$,
$u\mid m'$ and $m'/x_i\not\in I$ for some $i\in
\supp(\mathbf{a})$, again a contradiction.

For the converse  assume that $J/I$ is minimally generated by
squarefree monomials in $J\setminus I$ and  $\supp(u)\not\subset
\supp(m)$ for all $m\in J\setminus I$ and for all $u\in G(I)$. Let
$m\in S$ be a  monomial and $i\in\supp(m)$. Since the minimal
monomial generators of $J/I$ are squarefree, if $m\not\in J$, then
$x_im\not\in J$ or $x_im\in J\sect I$. Hence in this case the
multiplication map $m\to x_im$ is injective. In the case that if
$m\in J\setminus I$, then  $x_im\not\in I$. Otherwise there must
exist a $u\in G(I)$ such that $u\mid x_im$. Therefore
$\supp(u)\subset\supp(x_im)=\supp(m)$ which is a contradiction.}
\end{Remark}

Yanagawa \cite[Lemma 2.3]{Y} proved that if $M$ and $M'$ are
squarefree $S$-modules and $\phi\: M\to M'$ is a
$\NN^n$-homogeneous homomorphism, then $\Ker \phi$ and
$\Coker\phi$ are again squarefree $S$-modules. This implies that
each syzygy module $\Syz_i(M)$ in a multigraded minimal free
$S$-resolution $F_\bullet$ of $M$ is squarefree.

It is easy to see that if $M$ is a squarefree $S$-module, then
$\dim_K M_\mathbf{a}=\dim_K M_{\supp(\mathbf a)}$ for any $\mathbf
a\in\NN^n$, and $M$ is generated by its squarefree parts
$\{M_F\:F\subset[n]\}$.

Next we recall the following definition which is due to T.\;
R\"{o}mer \cite{R}.

\begin{Definition} A finitely generated $\NN^n$-graded $E$-module
$N=\Dirsum_{\mathbf a\in\NN^n}N_\mathbf{a}$ is called squarefree
if it has only squarefree components.
\end{Definition}
For example the exterior face ring $K\{\Delta\}$ of a
simplicial complex $\Delta$ is a squarefree $E$-module.

 We denote by $SQ(S)$ the abelian category of the
squarefree $S$-modules, where the morphisms are the $\NN^n$-graded
homogeneous homomorphisms and denote by $SQ(E)$ the abelian
category of squarefree $E$-modules, where the morphisms are the
$\NN^n$-graded homogeneous homomorphisms. R\"{o}mer
\cite[Corollary 1.6]{R} proved that there are two exact additive
covariant functors
$$ \mathbf{F} \: SQ(S)\mapsto SQ(E),\;\;M\mapsto {\mathbf F}(M)\quad \text{and}\quad
\mathbf G\: SQ(E)\mapsto SQ(S),\;\; N\mapsto {\mathbf G}(N)$$ of
abelian categories such that $(\mathbf F\circ \mathbf G)(N)=N$ and
$(\mathbf G\circ \mathbf F)(M)=M$. Hence the categories $SQ(S)$
and $SQ(E)$ are equivalent. Let $M\in SQ(S)$. By the construction
of $N= \mathbf{F}(M)$ given in \cite{AAH} and \cite{R}, each
minimal homogeneous system of generators $m_1,\ldots,m_t$ of $M$
corresponds to a homogeneous minimal system of generators
$n_1,\ldots,{n_t}$ of $N={\mathbf F}(M)$, and for   all $F\subset
[n]$ we have an isomorphism of $K$-vector spaces $\theta_F\:
M_F\to {\mathbf F}(M)_F$. This isomorphism is described as
follows: an element $m\in M_F$  can be written as $m=\sum
a_im_ix_{F_i}$, where $a_i\in K$  and where $F$ is the disjoint
union of $F_i$ and $\deg(m_i)=G_i$ for each $i$. Then
\begin{eqnarray}
\label{terrible} \theta_F(m) =\sum
(-1)^{\sigma{(G_i,F_i)}}a_in_ie_{F_i},
\end{eqnarray}
where $\sigma{(G_i,F_i)}=|\{(r,s)\: r\in G_i,\; s\in F_i,\;
r>s\}|$. The definition of $\theta_F$ does not depend on the
particular presentation of $m$ as a homogeneous linear combination
of the $m_i$. In particular, we have that $\theta_{G_i}(m_i)=n_i$
for all $i$.

We set  $M_{\sq}=\Dirsum_FM_F$ and define  the isomorphism of
graded $K$-vector spaces $\theta\: M_{\sq}\to N$ by requiring that
$\theta(m)=\theta_F(m)$ for all $m\in M_F$. Now Formula
(\ref{terrible}) can be extended as follows:

\begin{Lemma}
\label{alsoterrible} Let $m$ be a squarefree element of $M$ with
$\supp(m)=F$, and let $m=\sum_i a_iw_ix_{L_i}$ with $a_i\in K$ and
$w_i$ squarefree with $\supp(w_i)=F_i$  such that $F$ is the
disjoint union of $F_i$ and $L_i$ for all $i$. Then
\[
\theta(m) =\sum a_i(-1)^{\sigma{(F_i,L_i)}}\theta(w_i)e_{L_i}.
\]
\end{Lemma}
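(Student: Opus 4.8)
The strategy is to reduce the stated identity to Formula (\ref{terrible}) by expanding each intermediate squarefree element $w_i$ in terms of the minimal generators $m_1,\dots,m_t$ of $M$, and then carefully tracking the signs coming from the function $\sigma$. First I would write, for each $i$, an expression $w_i=\sum_j b_{ij}\,m_j x_{H_{ij}}$ (a homogeneous linear combination of the generators, with $b_{ij}\in K$) where $\supp(w_i)=F_i$ is the disjoint union of $G_j=\supp(m_j)$ and $H_{ij}$ whenever $b_{ij}\neq 0$. Substituting this into $m=\sum_i a_i w_i x_{L_i}$ gives a presentation $m=\sum_{i,j} a_i b_{ij}\, m_j x_{H_{ij}\cup L_i}$ of the squarefree element $m$ directly in terms of the $m_j$, where for each nonzero term the set $F=\supp(m)$ is the disjoint union of $G_j$ and $H_{ij}\cup L_i$. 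Applying (\ref{terrible}) to this presentation yields
\[
\theta(m)=\sum_{i,j} a_i b_{ij}\,(-1)^{\sigma(G_j,\,H_{ij}\cup L_i)}\, n_j\, e_{H_{ij}\cup L_i}.
\]
On the other hand, applying (\ref{terrible}) to the presentation of each $w_i$ gives $\theta(w_i)=\sum_j b_{ij}(-1)^{\sigma(G_j,H_{ij})}n_j e_{H_{ij}}$, so the claimed right-hand side $\sum_i a_i(-1)^{\sigma(F_i,L_i)}\theta(w_i)e_{L_i}$ equals
\[
\sum_{i,j} a_i b_{ij}\,(-1)^{\sigma(F_i,L_i)+\sigma(G_j,H_{ij})}\, n_j\, e_{H_{ij}}\wedge e_{L_i}.
\]

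The main obstacle — and the only real content — is the sign bookkeeping: I must show that for each index pair $(i,j)$ the two signs agree, i.e.
\[
(-1)^{\sigma(G_j,\,H_{ij}\cup L_i)} e_{H_{ij}\cup L_i}
=(-1)^{\sigma(F_i,L_i)+\sigma(G_j,H_{ij})}\, e_{H_{ij}}\wedge e_{L_i}.
\]
I would prove this by a double-counting argument on inversion pairs. The exterior product $e_{H_{ij}}\wedge e_{L_i}$ equals $(-1)^{\tau}e_{H_{ij}\cup L_i}$ where $\tau=|\{(r,s):r\in H_{ij},\,s\in L_i,\,r>s\}|$; since $H_{ij}$ and $L_i$ are disjoint this is precisely $\sigma(H_{ij},L_i)$. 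So the identity to check becomes the purely combinatorial statement
\[
\sigma(G_j,H_{ij}\cup L_i)\equiv \sigma(F_i,L_i)+\sigma(G_j,H_{ij})+\sigma(H_{ij},L_i)\pmod 2.
\]
Because $H_{ij}\cup L_i$ is a disjoint union, the left side splits exactly as $\sigma(G_j,H_{ij}\cup L_i)=\sigma(G_j,H_{ij})+\sigma(G_j,L_i)$ (count pairs $(r,s)$ with $r\in G_j$, $r>s$, according to whether $s\in H_{ij}$ or $s\in L_i$). Cancelling the common term $\sigma(G_j,H_{ij})$, it remains to see that $\sigma(F_i,L_i)\equiv\sigma(G_j,L_i)+\sigma(H_{ij},L_i)\pmod 2$; but $F_i$ is the disjoint union of $G_j$ and $H_{ij}$ (for every nonzero term), so counting pairs $(r,s)$ with $s\in L_i$, $r>s$ according to whether $r\in G_j$ or $r\in H_{ij}$ gives exactly $\sigma(F_i,L_i)=\sigma(G_j,L_i)+\sigma(H_{ij},L_i)$, an honest equality of integers. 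This closes the argument, term by term, and summing over $(i,j)$ gives $\theta(m)=\sum_i a_i(-1)^{\sigma(F_i,L_i)}\theta(w_i)e_{L_i}$.

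One small point to address for rigor: the validity of formula (\ref{terrible}) requires that the presentation be written in terms of the chosen minimal generators $m_j$, whereas the lemma is stated with arbitrary squarefree elements $w_i$; the remark in the excerpt that "the definition of $\theta_F$ does not depend on the particular presentation" is what lets us pass freely between the two presentations of $m$. I would invoke that well-definedness explicitly at the step where I substitute the expansions of the $w_i$, so that the value $\theta(m)$ computed from the $m_j$-presentation is the same as the one appearing in the statement. With that in place the proof is complete.
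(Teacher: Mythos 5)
Your proposal is correct and follows essentially the same route as the paper's proof: expand each $w_i$ in the minimal generators $m_{ij}$, substitute to get a presentation of $m$ to which Formula (\ref{terrible}) applies, and then match signs term by term using the additivity $\sigma(G,H\cup L)=\sigma(G,H)+\sigma(G,L)$, $\sigma(G\cup H,L)=\sigma(G,L)+\sigma(H,L)$ together with $e_{H}e_{L}=(-1)^{\sigma(H,L)}e_{H\cup L}$. Your explicit appeal to the presentation-independence of $\theta_F$ is a welcome clarification of a step the paper leaves implicit.
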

\begin{proof}  Let $m_1,\ldots,m_t$ be a minimal homogeneous
system of generators of $M$ and let $n_1,\ldots,n_t$ be the
corresponding minimal homogeneous system of generators of $N$ with
$\theta(m_i)=n_i$. Let  $w_i=\sum b_{i j} m_{i j}x_{H_{i j}}$
where $b_{i j}\in K$ and where $F_i$ is a disjoint union of $G_{i
j}=\supp(m_{i j})$ and $H_{i j}$ for all $i j$. Then
\[
\theta(m)=\theta(\sum_ia_i(\sum_{j}b_{i j}m_{i j}x_{H_{i j
}})x_{L_i}= \theta(\sum_i\sum_{j }a_ib_{i j}m_{i j}x_{H_{i j}\cup
L_i})=\sum_i\sum_{j }(-1)^{\sigma(G_{i j},H_{i j}\cup L_i)}n_{i
j}e_{H_{i j}\cup L_i}.
\]
On the other hand
\begin{eqnarray*}
\sum a_i(-1)^{\sigma{(F_i,L_i)}}\theta(w_i)e_{L_i}&=&\sum_i\sum_{j
}(-1)^{\sigma(G_{i j}\cup H_{i j},L_i)} (-1)^{\sigma(G_{i j},H_{i
j})} a_ib_{i j}n_{i j}e_{H_{i j}}e_{L_i}\\&=&\sum_i\sum_{j
}(-1)^{\sigma(G_{i j},L_i)}(-1)^{\sigma( H_{i j},L_i
)}(-1)^{\sigma(G_{i j},H_{i j})}(-1)^{\sigma(H_{i j},L_i)}
a_ib_jn_{i j}e_{H_{i j}\cup L_i}\\&=&\sum_i\sum_{i
j}(-1)^{\sigma(G_{i j},H_{i j}\cup L_i)}n_{i j}e_{H_{i j}\cup
L_i}=\theta(m).
\end{eqnarray*}
\end{proof}

Let $W$ be an $\ZZ^n$-graded $K$-vector space. Then
$W^\ast=\Hom_K(W,K(-\mathbf{1}))$ is again a $\ZZ^n$-graded
$K$-vector space with the graded components
\[
(W^\ast)_\mathbf{a}=\Hom_K(W_{\mathbf 1-\mathbf a},K)\; \text {for
all}\; \mathbf a\in \ZZ^n.
\]
Here $\mathbf 1=(1,\ldots,1)$. Note that if $W$ is an
$\ZZ^n$-graded $E$-module, then  $W^\ast$ is also a $\ZZ^n$-graded
$E$-module. Furthermore if $W$ is a squarefree $E$-module, then
$W^\ast$ is again a squarefree $E$-module.

 In the category of
squarefree $E$-modules the graded $E$-dual is defined to be
$N^\vee=\Hom_E(N,E)$. Observe that $()^\vee$ is an exact
contravariant functor, see  \cite[5.1(a)]{AH}. Let $\phi\in
N^\vee$ and $n\in N$. Then $\phi(n)=\sum_{F\subseteq
[n]}\phi_F(n)e_F$ with $\phi_F(n)\in K$ for all $F\subseteq [n]$.
Therefore for each $F\subseteq[n]$ we obtain a $K$-linear map
$\phi_F: N\to K$.

The following theorem is important for the main result of this
paper.

\begin{Theorem} \cite{HHb}
\label{importent} Let $N$ be a $\ZZ^n$-graded $E$-module. The map
$\eta\:N^\vee \to N^\ast$, $\phi \to \phi_{[n]}$ is a functorial
isomorphism of $\ZZ^n$-graded $E$-modules. In particular if $N$ is
squarefree $E$-module, then $N^\vee$ is again
squarefree and $\eta$ is a functorial isomorphism of squarefree
$E$-modules.
\end{Theorem}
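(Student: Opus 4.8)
The plan is to construct the isomorphism $\eta\colon N^\vee\to N^\ast$ directly from the recipe $\phi\mapsto\phi_{[n]}$ and check the required properties step by step. First I would verify that $\eta$ is $\ZZ^n$-graded: if $\phi\in(N^\vee)_{\mathbf a}=\Hom_E(N,E)_{\mathbf a}$, then $\phi$ raises degree by $\mathbf a$, so for $n\in N_{\mathbf b}$ we have $\phi(n)\in E_{\mathbf a+\mathbf b}$; the coefficient $\phi_{[n]}(n)$ of $e_{[n]}$ is nonzero only when $\mathbf a+\mathbf b=\mathbf 1$, i.e.\ $\mathbf b=\mathbf 1-\mathbf a$, which is exactly the condition that $\phi_{[n]}\in(N^\ast)_{\mathbf a}=\Hom_K(N_{\mathbf 1-\mathbf a},K)$. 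So $\eta$ respects the grading.

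Next I would check $E$-linearity. Using the bimodule structure on $N$ and the $E$-module structure on $N^\ast$ (which is induced via the duality $\Hom_K(-,K(-\mathbf 1))$), one needs that $\eta(e_F\phi)=e_F\eta(\phi)$ and similarly on the right. The key computational point is extracting the top coefficient: for a monomial $e_G\in E$, the coefficient of $e_{[n]}$ in $e_F\wedge e_G$ is $\pm 1$ when $F\sqcup G=[n]$ and $0$ otherwise, with a sign $(-1)^{\sigma(F,G)}$ governed by shuffling $F$ past $G$ — precisely the kind of sign bookkeeping already appearing in Lemma~\ref{alsoterrible}. Matching these signs with the sign conventions built into the $E$-module structure of $N^\ast$ is the routine-but-delicate heart of the argument; I expect this sign-chasing to be the main obstacle, though it is purely mechanical once the conventions are pinned down.

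Then I would show $\eta$ is bijective. Injectivity: if $\phi_{[n]}=0$, then for every $n\in N$ and every $F\subseteq[n]$, writing $\phi_F(n)$ as the top coefficient of $\phi(e_{F^c}n)$ (up to sign), and using that $\phi$ is a homomorphism of $E$-modules together with $e_{F^c}e_F=\pm e_{[n]}$, one gets $\phi_F(n)=\pm(\phi_{[n]})(e_{F^c}n)=0$, hence $\phi=0$. Surjectivity: given $\psi\in N^\ast$, define $\phi(n)=\sum_{F\subseteq[n]}\pm\psi(e_{F^c}n)\,e_F$ (with the signs forced by the injectivity computation) and verify that this $\phi$ is a well-defined $E$-module homomorphism $N\to E$ with $\phi_{[n]}=\psi$. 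Functoriality is immediate: for a morphism $f\colon N\to N'$ in $SQ(E)$, both $f^\vee$ and $f^\ast$ act by precomposition, and taking the top coefficient commutes with precomposition, so the naturality square commutes. Finally, the ``in particular'' statement follows at once: we already observed before the theorem that $W^\ast$ is squarefree whenever $W$ is, so the isomorphism $N^\vee\cong N^\ast$ transfers squarefreeness to $N^\vee$, and $\eta$ restricts to a functorial isomorphism on the subcategory $SQ(E)$.
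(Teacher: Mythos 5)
The paper does not prove this theorem at all: it is quoted verbatim from the Herzog--Hibi book \cite{HHb}, so there is no internal proof to compare your argument against. Judged on its own, your outline is the standard and correct way to establish the result; it amounts to the observation that $E$ is self-injective with one-dimensional socle $(e_{[n]})$ in degree $\mathbf 1$, so that an $E$-homomorphism $\phi\colon N\to E$ is completely determined by the top coefficient $\phi_{[n]}$, via $\phi_F(n)=\pm\phi_{[n]}(e_{F^c}n)$. Your grading check, your injectivity argument, the explicit formula $\phi(n)=\sum_F\pm\psi(e_{F^c}n)e_F$ for the inverse, and the functoriality-by-precomposition remark are all right, and the deduction of the squarefree case from the earlier observation that $W^\ast$ is squarefree whenever $W$ is matches exactly how the paper uses the theorem. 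The only places where your write-up defers real work are (i) the verification that $\eta$ intertwines the two $E$-module structures (the structure on $N^\ast$ comes through the bimodule rule $fm=(-1)^{|\mathbf a||\mathbf b|}mf$, and the signs must be checked against $\sigma(F,G)$ for $e_F\wedge e_G=\pm e_{F\cup G}$), and (ii) the check that your candidate inverse is actually $E$-linear rather than merely $K$-linear; both are genuinely mechanical, as you say, but they are the entire content of the proof, so a complete write-up would have to carry out at least one of these sign computations explicitly rather than assert it.
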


In \cite{R}, the Alexander dual of a squarefree
$S$-module is defined as follows:
\begin{Definition} Let $M\in SQ(S)$. Then $M^\vee=\mathbf G(\mathbf F(M)^\vee)$ is called
the Alexander dual of $M$.
\end{Definition}
Note that $$ SQ(S)\to SQ(S),\quad M\to M^\vee$$ is a
contravariant exact functor.

For example if $I\subset J$ are squarefree monomial ideals in $S$.
Let $\Delta$ and $\Gamma$ be simplicial complexes with
$I=I_\Delta$ and $J=I_{\Gamma}$.  Then $J/I$ is a squarefree
$S$-module and $(J/I)^\vee=I_{\Delta^\vee}/I_{\Gamma^\vee}$. In
particular if $\Delta$ is a simplicial complex on the vertex set
$[n]$ and $I_\Delta$ its Stanley-Reisner ideal, then
$(S/I_\Delta)^\vee=I_{\Delta^\vee}$ and
$(I_\Delta)^\vee=S/I_{\Delta^\vee}$.

\section{Prime filtrations and Alexander duality}
Let $S=K[x_1,\ldots,x_n]$ be the polynomial ring in $n$ variables
over a field $K$ and $M$ a finitely generated $\ZZ^n$-graded
$S$-module. It is known that the associated prime ideals of  $M$
are monomial ideals, and any monomial prime ideal is of the form
$P_F=(x_i\:i\in F)$ for some $F\subset [n]$. A chain $
0=M_0\subset M_1\subset\ldots\subset M_r=M$ of $\ZZ^n$-graded
submodules of $M$ such that ${M_i}/{M_{i-1}}\iso
{S}/{P_{F_i}}(-G_i)$ is called a prime filtration of $M$. If $M$
is a finitely generated $\ZZ^n$-graded $S$-module, then  a prime
filtration  of $M$  always exists, see \cite[Theorem 6.4]{Ma}.

We shall need the following

\begin{Lemma}
\label{notclear}
Let $M\subset M'$ be two squarefree
 $S$-modules and $N\subset N'$ be two squarefree $E$-modules.
\begin{enumerate}
\item[(a)]  If $M'/M \iso {S}/{P_F}(-G)$, then
$G\cap F=\emptyset$;
\item[(b)] We have
${M'}/{M} \iso {S}/{P_F}(-G)$ if and only if  ${\mathbf
F}(M')/{\mathbf F}(M)\iso E/P_{F\cup G}(-G)$, \\ where $P_{F\cup
G}=( e_j\: j\in F\cup G)$;
\item[(c)] We have $N'/N\iso E/P_{F\union G}(-G)$ if and only
 if ${\mathbf G}(N')/{\mathbf G}(N)\iso S/P_{F}(-G)$.
\end{enumerate}
\end{Lemma}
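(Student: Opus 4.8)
My plan is to establish the three parts in the order (a), then (b), then deduce (c) formally from (b). For part (a), I would argue directly on the squarefree structure: if $M'/M \iso S/P_F(-G)$, then $M'/M$ is a squarefree $S$-module whose unique (up to scalar) generator $\bar m$ sits in degree $G$. But $S/P_F(-G)$ is nonzero only in degrees $\mathbf a$ with $\mathbf a \geq G$ and $\supp(\mathbf a - G)\cap F = \emptyset$; in particular its degree-$G$ component is one-dimensional and survives multiplication by $x_i$ for every $i\notin F$. If some $i\in G\cap F$, then in $S/P_F(-G)$ one has $x_i\cdot(\text{generator}) = 0$, i.e. the multiplication map $(M'/M)_G \to (M'/M)_{G+\epsilon_i}$ is the zero map while $i\in\supp(G)=\supp(\text{generator})$. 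Since $M'$ and $M$ are squarefree, $M'/M$ is squarefree by \cite[Lemma 2.3]{Y}, and for a squarefree module that multiplication map must be bijective when $i\in\supp$ of the degree — but here it is zero on a nonzero space, a contradiction. Hence $G\cap F=\emptyset$.

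For part (b), the key tool is the exactness and faithfulness of the functor $\mathbf F$: since $\mathbf F$ is exact and $\mathbf G\circ\mathbf F = \id$, we have $M'/M \iso \mathbf G(\mathbf F(M')/\mathbf F(M))$, so it suffices to identify $\mathbf F(S/P_F(-G))$ with $E/P_{F\cup G}(-G)$ as squarefree $E$-modules. I would do this via the isomorphisms $\theta_H$ from (\ref{terrible}): the squarefree components of $S/P_F(-G)$ are indexed by those $H\subseteq[n]$ with $G\subseteq H$ and $(H\setminus G)\cap F=\emptyset$ — equivalently (using part (a), so that $G\cap F=\emptyset$) by $G\subseteq H$ with $H\cap F\subseteq\emptyset$... more precisely $H\supseteq G$ and $H\cap F=\emptyset$ after noting $G\cap F=\emptyset$. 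These are exactly the squarefree components of $E/P_{F\cup G}(-G)$: a basis monomial $e_L$ of $E$ shifted by $G$ lives in degree $G\cup L$ and is nonzero in $E/P_{F\cup G}$ precisely when $(G\cup L)\cap(F\cup G)$ contributes nothing new beyond $G$, i.e. $L\cap(F\cup G)=\emptyset$, so the surviving degrees are $H=G\sqcup L$ with $L\cap(F\cup G)=\emptyset$. One checks these index sets coincide and that $\theta$ carries the $S$-module structure to the $E$-module structure compatibly, using Lemma \ref{alsoterrible} to track the sign $(-1)^{\sigma(\cdot,\cdot)}$ when multiplying the generator of $\mathbf F(S/P_F(-G))$ by $e_L$. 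Then applying $\mathbf G$ and exactness gives the equivalence in (b). The converse direction is immediate from $\mathbf G\circ\mathbf F=\id$ and exactness of $\mathbf G$.

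Part (c) then follows by applying part (b) with $M = \mathbf G(N)$, $M' = \mathbf G(N')$: since $\mathbf F\circ\mathbf G = \id$, we have $\mathbf F(M') = N'$, $\mathbf F(M) = N$, so $N'/N \iso E/P_{F\cup G}(-G)$ iff $\mathbf F(M')/\mathbf F(M)\iso E/P_{F\cup G}(-G)$ iff (by (b)) $M'/M\iso S/P_F(-G)$, which is exactly $\mathbf G(N')/\mathbf G(N)\iso S/P_F(-G)$. The main obstacle I anticipate is the bookkeeping in (b): one must verify that the index sets of nonzero squarefree components genuinely match (this is where part (a)'s disjointness $G\cap F=\emptyset$ is essential — otherwise $E/P_{F\cup G}(-G)$ would be zero in degree $G$ while $S/P_F(-G)$ is not, and no isomorphism could exist), and that the map defined componentwise by $\theta$ is actually $E$-linear, i.e. respects wedge multiplication with the correct Koszul signs. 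Everything else is formal manipulation with the exact functors $\mathbf F$ and $\mathbf G$.
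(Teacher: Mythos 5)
Your proposal is correct and follows the same skeleton as the paper: part (a) by the same squarefreeness contradiction (an $i\in G\cap F$ would force $x_i$ to kill the degree-$G$ generator of the squarefree quotient $M'/M$, contradicting bijectivity of the multiplication map), part (b) by reducing via exactness of $\mathbf F$ to the single identity $\mathbf F(S/P_F(-G))\iso E/P_{F\cup G}(-G)$, and part (c) formally from (b) using that $\mathbf F$ and $\mathbf G$ are mutually inverse exact functors. The one genuine difference is how that key identity in (b) is established: the paper simply cites the Aramova--Avramov--Herzog complex \cite{AAH}, through which R\"omer defined $\mathbf F$, whereas you propose to verify it by hand, matching the nonzero squarefree components of the two modules (which is where the disjointness from (a) is indeed needed) and checking $E$-linearity of $\theta$ with the Koszul signs via Lemma~\ref{alsoterrible}. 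Your route is more self-contained but requires the sign bookkeeping you anticipate; the paper's is a one-line appeal to the construction of $\mathbf F$ itself. Both are valid, and your component-matching argument is most cleanly finished by observing that $\mathbf F(S/P_F(-G))$ is cyclic in degree $G$ with annihilator containing $P_{F\cup G}$, so the resulting surjection $E/P_{F\cup G}(-G)\to\mathbf F(S/P_F(-G))$ is an isomorphism by comparing dimensions of the (already matched) graded components.
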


\begin{proof} (a) Suppose $G\cap F\neq \emptyset$.
Let $i\in G\cap F$ and let $f$ the homogeneous generator of $M'/M$.
Since $M'/M$ is squarefree, and since $\deg f=G$
it it follows that $x_if\neq 0$, a contradiction.

(b) Since ${\mathbf F}$ is an exact functor it suffices to show
that ${\mathbf F}(S/P_F(-G))=E/P_{F\union G}(-G)$. But this
follows immediately from the  Aramova-Avramov-Herzog complex
\cite[Theorem 1.3]{AAH} by which R\"{o}mer defined the functor
${\mathbf F}$ in \cite{R}.

(c) follows form (b) by using the fact that the functors
$\mathbf F$ and $\mathbf G$ are inverse to each other.
\end{proof}

Applying this lemma we get the following short exact sequence
$$0\to\mathbf F(M)\to \mathbf F (M')\to E/P_{F\cup G}(-G)\to 0.$$
Since $\Hom_E(-,E)$ is an contravariant exact functor, from the
above short exact sequence we obtain the short exact sequence
$$0\to\Hom_E(E/P_{F\cup G}(-G),E)\to {\mathbf F}(M')^\vee\to {\mathbf F}(M)^\vee\to 0.$$
 On the other hand $\Hom_E(E/P_{F\cup G}(-G),E)=\Hom_E(E/P_{F\cup G},E)(G)$. Since
 $$\Hom_E(E/P_{F\cup G},E)=0:_E P_{F\cup G}=(e_{F\cup G})\iso E/P_{F\cup G}(-F-G),$$
 one has
$\Hom_E(E/P_{F\cup G}(-G),E)\iso E/P_{F\cup G}(-F)$.

We conclude that the natural map
$$\alpha\: \mathbf F(M')^\vee\to
\mathbf F(M)^\vee$$ is an epimorphism  with $\Ker(\alpha)\iso E/P_{F\cup G}(-F)$.

\begin{Proposition}
\label{pedual} Let $N$ be a squarefree $E$-module and $N^\vee$ its
$E$-dual.
 Then there exists a chain $0\subset N_1\subset\ldots\subset N_t=N$ of
 squarefree submodules of $N$ with $N_i/N_{i-1}\iso
E/P_{F_i\cup G_i}(-G_i)$  if and only if there exists a chain
$0\subset H_1\subset\ldots\subset H_t=N^\vee$ of squarefree
submodule of $N^\vee$ with $H_i/H_{i-1}\iso E/P_{F_i\cup
G_i}(-F_i)$.
\end{Proposition}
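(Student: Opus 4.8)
The plan is to prove both implications by induction on $t$, using the exactness of the dual functor $(-)^\vee = \Hom_E(-,E)$ together with the computation carried out just before the statement. Recall that if $0 \to N' \to N'' \to E/P_{F\cup G}(-G) \to 0$ is a short exact sequence of squarefree $E$-modules, then dualizing gives a short exact sequence
\[
0 \to E/P_{F\cup G}(-F) \to (N'')^\vee \to (N')^\vee \to 0,
\]
since $\Hom_E(E/P_{F\cup G}(-G),E) \iso E/P_{F\cup G}(-F)$ as computed above, and since $(-)^\vee$ is exact (Theorem \ref{importent}). Moreover $(-)^\vee$ is a contravariant equivalence on $SQ(E)$: applying it twice recovers the original module up to natural isomorphism, because $\eta\colon N^\vee \to N^\ast$ is a functorial isomorphism and $(-)^\ast$ is an involution on squarefree $E$-modules. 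This self-duality is what makes the two implications symmetric, so it suffices to prove one direction.

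For the forward direction, suppose $0 = N_0 \subset N_1 \subset \cdots \subset N_t = N$ is a chain with $N_i/N_{i-1} \iso E/P_{F_i\cup G_i}(-G_i)$. First I would record that each $N_i$ is a squarefree submodule of a squarefree module, hence squarefree, so all the dual modules below are again squarefree (Theorem \ref{importent}). Dualizing the inclusions $N_{i-1}\subset N_i$ gives surjections $N_i^\vee \to N_{i-1}^\vee$; set $H_i = \Ker(N^\vee = N_t^\vee \to N_{t-i}^\vee)$. Then $0 = H_0 \subset H_1 \subset \cdots \subset H_t = \Ker(N^\vee \to N_0^\vee) = N^\vee$, and $H_i/H_{i-1}$ is the kernel of the induced surjection $N_{t-i+1}^\vee \to N_{t-i}^\vee$, which by the displayed short exact sequence applied to $0 \to N_{t-i} \to N_{t-i+1} \to E/P_{F_{t-i+1}\cup G_{t-i+1}}(-G_{t-i+1}) \to 0$ is isomorphic to $E/P_{F_{t-i+1}\cup G_{t-i+1}}(-F_{t-i+1})$. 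Re-indexing (replace $i$ by $t-i+1$) shows this is a prime filtration of $N^\vee$ of exactly the required form, with the roles of $F$ and $G$ interchanged in the shift but the union $F\cup G$ unchanged. Each $H_i$ is squarefree because it is the kernel of a homomorphism of squarefree modules (Yanagawa's lemma, quoted in the excerpt, or its $E$-analogue).

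The converse follows by applying the forward direction to $N^\vee$ in place of $N$: a chain on $(N^\vee)^\vee \iso N$ with successive quotients $E/P_{F_i\cup G_i}(-G_i)$ is produced from the given chain on $N^\vee$, and then one invokes the natural isomorphism $(N^\vee)^\vee \iso N$ to transport it back to a chain on $N$ itself; one has to check that this isomorphism carries the filtration of $(N^\vee)^\vee$ to a genuine filtration of $N$ with the same graded quotients, which is immediate from functoriality of $\eta$. The main technical obstacle I anticipate is purely bookkeeping: verifying that the re-indexing is consistent, i.e.\ that after dualizing twice the pair $(F_i,G_i)$ returns to its original value and not to some twisted version, and confirming that the isomorphism $\Hom_E(E/P_{F\cup G}(-G),E)\iso E/P_{F\cup G}(-F)$ is compatible with the maps in the filtration rather than merely an abstract isomorphism. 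No serious new idea beyond the exactness and self-duality of $(-)^\vee$ seems to be needed.
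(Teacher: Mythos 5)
Your argument is correct and follows essentially the same route as the paper: reduce to one direction via $(N^\vee)^\vee=N$, dualize the filtration using the exactness of $(-)^\vee$ and the computation $\Hom_E(E/P_{F\cup G}(-G),E)\iso E/P_{F\cup G}(-F)$, take $H_i$ to be the kernel of the composite surjection $N^\vee\to N_{t-i}^\vee$, and identify $H_i/H_{i-1}$ with the kernel of $N_{t-i+1}^\vee\to N_{t-i}^\vee$ (the paper does this last identification explicitly via the Snake Lemma). Your explicit attention to the re-indexing $i\mapsto t-i+1$ is a point the paper glosses over, but it changes nothing of substance.
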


\begin{proof} It is enough to prove one direction of the assertion, because $(N^\vee)^\vee=N$.
 Let $0=N_0\subset N_1\subset\ldots\subset N_t=N$
be a chain of squarefree $E$-modules with $N_i/N_{i-1}\iso
E/P_{F_i\cup G_i}(-G_i)$. From the observation above we see that for
 each $i$
there is an epimorphism $\alpha_i\: N_{t-i+1}^\vee\to
N_{t-i}^\vee$ with $\Ker\alpha_i\iso E/P_{F_i\cup G_i}(-F_i)$ .

Let $\beta_i\: N^\vee\to N_{t-i}^\vee$ be the epimorphism which is
defined by $\beta_i=\alpha_i\circ\alpha_{i-1}\circ\cdots
\circ\alpha_1$. Then
$$0\subset \Ker \beta_1\subset\cdots\subset \Ker \beta_t=N^\vee$$
is a filtration of $N^\vee$ by squarefree $E$-modules.
We only need to show that $\Ker\beta_i/\Ker\beta_{i-1}\iso \Ker
\alpha_i$. This follows from the Snake Lemma  applied to the following commutative diagram
\[
\begin{CD}
0@>>> &\Ker\beta_{i-1}& @ >\iota_1 >> & N^\vee &
@ >\beta_{i-1} >> & N^\vee_{t-i+1}& @>>> 0 \\
& & &@V \iota_2 VV & & @ V \id VV & & @ V \alpha_i VV \\
0@>>> &\Ker\beta_{i}& @ >\iota_3 >> & N^\vee &
 @ >\beta_i >>& N^\vee_i& @>>> 0
\end{CD}
\]
with exact rows,  where the $\iota_j$  are  inclusion maps.
\end{proof}

Now we can prove the corresponding  result for  squarefree $S$-modules.
 \begin{Theorem}
 \label{psdual} Let $M$ be a squarefree $S$-module and $M^\vee$
 its Alexander dual. Then there exists a chain
 $ 0\subset M_1\subset\cdots\subset M_r=M$ of
 squarefree submodules of $M$ with $M_i/M_{i-1}\iso
S/P_{F_i}(-G_i)$ if and only if  there exists a chain $0\subset
L_1\subset \cdots\subset L_r=M^\vee$ of squarefree submodules of
$M^\vee$ with $L_i/L_{i-1}\iso S/P_{G_i}(-F_i)$.
\end{Theorem}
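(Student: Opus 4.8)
The plan is to deduce Theorem \ref{psdual} from Proposition \ref{pedual} by transporting the filtration through the category equivalence $\mathbf F\colon SQ(S)\to SQ(E)$ and $\mathbf G\colon SQ(E)\to SQ(S)$, using Lemma \ref{notclear} to translate the quotients. Since $(M^\vee)^\vee=M$ (as noted after the definition of the Alexander dual, $()^\vee$ is an exact contravariant functor on $SQ(S)$, so it is an involution up to natural isomorphism), it suffices to prove only the forward implication.

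First I would take a chain $0=M_0\subset M_1\subset\cdots\subset M_r=M$ of squarefree submodules with $M_i/M_{i-1}\iso S/P_{F_i}(-G_i)$. By Lemma \ref{notclear}(a) each pair satisfies $F_i\cap G_i=\emptyset$. Applying the exact functor $\mathbf F$ gives a chain $0=\mathbf F(M_0)\subset\mathbf F(M_1)\subset\cdots\subset\mathbf F(M_r)=\mathbf F(M)=N$ of squarefree $E$-submodules, and by Lemma \ref{notclear}(b) the successive quotients are $\mathbf F(M_i)/\mathbf F(M_{i-1})\iso E/P_{F_i\cup G_i}(-G_i)$. This is exactly a filtration of the type appearing in the hypothesis of Proposition \ref{pedual} (with $t=r$). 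Hence Proposition \ref{pedual} yields a chain $0\subset H_1\subset\cdots\subset H_r=N^\vee$ of squarefree $E$-submodules with $H_i/H_{i-1}\iso E/P_{F_i\cup G_i}(-F_i)$.

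Now I would transport this back to $SQ(S)$ by applying the exact functor $\mathbf G$. Set $L_i=\mathbf G(H_i)$. Since $\mathbf G$ is exact and covariant, $0=\mathbf G(0)\subset L_1\subset\cdots\subset L_r=\mathbf G(N^\vee)$ is a chain of squarefree $S$-submodules with $L_i/L_{i-1}\iso\mathbf G(H_i/H_{i-1})\iso\mathbf G(E/P_{F_i\cup G_i}(-F_i))$. By Lemma \ref{notclear}(c), reading the roles of $F$ and $G$ appropriately — here the quotient $E/P_{F_i\cup G_i}(-F_i)=E/P_{G_i\cup F_i}(-F_i)$ is of the form $E/P_{G\cup F}(-G)$ with the ``shift set'' $F_i$ playing the role of $G$ and the complementary set $G_i$ playing the role of $F$, which is legitimate since $F_i\cap G_i=\emptyset$ — we get $\mathbf G(E/P_{F_i\cup G_i}(-F_i))\iso S/P_{G_i}(-F_i)$. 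Thus $L_i/L_{i-1}\iso S/P_{G_i}(-F_i)$. Finally, $\mathbf G(N^\vee)=\mathbf G(\mathbf F(M)^\vee)=M^\vee$ by the definition of the Alexander dual, so $L_r=M^\vee$ and the chain $0\subset L_1\subset\cdots\subset L_r=M^\vee$ is the desired filtration.

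The main point to be careful about — the only place where anything could go wrong — is the bookkeeping in the last step: verifying that Lemma \ref{notclear}(c) genuinely applies to $E/P_{F_i\cup G_i}(-F_i)$, i.e. that one may write $F_i\cup G_i$ as a disjoint union $G\sqcup F$ with shift $-G$ matching the stated form, and that this is exactly the disjointness $F_i\cap G_i=\emptyset$ guaranteed by part (a). Once that symmetry between the two index sets is observed, everything else is a formal consequence of the exactness of $\mathbf F$, $\mathbf G$ and the translation lemmas, together with Proposition \ref{pedual}; no further computation is needed.
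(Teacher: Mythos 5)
Your proposal is correct and follows exactly the paper's own argument: reduce to one implication via $(M^\vee)^\vee=M$, push the filtration through $\mathbf F$ using Lemma \ref{notclear}(b), invoke Proposition \ref{pedual}, and pull back through $\mathbf G$ using Lemma \ref{notclear}(c) with the roles of $F_i$ and $G_i$ interchanged. The extra care you take with the bookkeeping in applying part (c) is the right point to check, and it goes through precisely because of the disjointness from part (a).
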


\begin{proof} Again it is enough to prove one direction of the assertion, because $(M^\vee)^\vee=M$.
From the given chain of submodules of $M$ we get a
chain
$$0\subset \mathbf F(M_1)\subset\cdots\subset \mathbf F(M_r)=\mathbf F(M)$$
of  squarefree $E$-modules with $\mathbf F(M_i)/\mathbf
F(M_{i-1})\iso E/P_{F_i\cup G_i}(-G_i)$, see Lemma~\ref{notclear}(b). Therefore by Proposition
\ref{pedual} there exists a chain $0\subset N_1\subset \cdots
\subset N_{r-1}\subset N_r=(\mathbf F(M))^\vee$ of squarefree
$E$-modules with $N_i/N_{i-1}\iso E/P_{F_i\cup G_i}(-F_i)$.
This chain of squarefree $E$-modules induces the chain
$$0\subset \mathbf G(N_1)\subset\cdots\subset \mathbf
G(N_{r-1})\subset \mathbf G(N_r) =\mathbf G(\mathbf
F(M))^\vee)=M^\vee$$ of squarefree $S$-modules with $\mathbf
G(N_i)/\mathbf G(N_{i-1})\iso S/P_{G_i}(-F_i)$, see Lemma~\ref{notclear}(c).
\end{proof}

We now explain what Theorem~\ref{psdual} means in the special case
that $M=J/I$ where $I\subset J\subset S$ are squarefree monomial
ideals. To this end we introduce the following notation: let
$I\subset S$ be a squarefree monomial ideal and $\Delta$ be the
simplicial complex such that $I=I_\Delta$. We set
$\tilde{I}=I_{\Delta^\vee}$. Then $\tilde{\tilde{I}}=I$ since
$(\Delta^\vee)^\vee=\Delta$, and  if $I\subset J$ are two
squarefree monomial ideals, then $\tilde{J}\subset\tilde{I}$ and
$(J/I)^\vee=\tilde{I}/\tilde{J}$.

\begin{Corollary} \label{J/I}
Let $I\subset J$ be a squarefree monomial ideals. The following conditions  are
equivalent:
\begin{enumerate}
\item[(a)] $I=I_0\subset I_1\subset \ldots\subset I_{r-1}\subset
I_r=J$ is an $\NN^n$-graded  prime filtration of $J/I$ with
$I_i/I_{i-1}\iso S/P_{F_i}(-G_i)$.
\item[(b)]  $\tilde{J}=\tilde{I}_r\subset
\tilde{I}_{r-1}\subset\ldots\subset \tilde
I_1\subset\tilde{I}_0=\tilde I$ is an $\NN^n$-graded prime
filtration of $\tilde{I}/\tilde{J}=(J/I)^\vee$ with
$\tilde{I}_{i-1}/\tilde{I}_i\iso S/P_{G_i}(-F_i)$.
 \end{enumerate}
\end{Corollary}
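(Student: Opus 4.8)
The plan is to deduce Corollary~\ref{J/I} directly from Theorem~\ref{psdual} by translating the abstract filtration statement into the concrete language of squarefree monomial ideals. The key dictionary is already recorded in the paragraph preceding the corollary: if $I=I_\Delta$ and $J=I_\Gamma$, then $J/I$ is a squarefree $S$-module with $(J/I)^\vee=\tilde I/\tilde J=I_{\Delta^\vee}/I_{\Gamma^\vee}$, and the operation $I\mapsto\tilde I$ is an inclusion-reversing involution on squarefree monomial ideals. So the content of the corollary is simply that the abstract chain $0\subset M_1\subset\cdots\subset M_r=M$ in Theorem~\ref{psdual}, when $M=J/I$, can be taken to come from a chain of monomial ideals $I=I_0\subset I_1\subset\cdots\subset I_r=J$, and that the dual chain $0\subset L_1\subset\cdots\subset L_r=M^\vee$ can likewise be realized by a chain of monomial ideals $\tilde J=\tilde I_r\subset\cdots\subset\tilde I_0=\tilde I$.

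First I would observe that a chain of squarefree $S$-submodules $I=I_0\subset\cdots\subset I_r=J$ sitting between $I$ and $J$ is the same thing as a chain of $\NN^n$-graded submodules $0\subset I_1/I\subset\cdots\subset I_r/I=J/I$ of $M=J/I$; since every $\NN^n$-graded (equivalently, monomial) submodule of $S/I$ lifts to a unique monomial ideal containing $I$, the $I_i$ are automatically monomial ideals, and squarefreeness of the successive quotients is exactly the hypothesis $I_i/I_{i-1}\iso S/P_{F_i}(-G_i)$. Thus condition (a) of the corollary is literally the specialization of the left-hand side of Theorem~\ref{psdual} to $M=J/I$. Applying the theorem gives a chain $0\subset L_1\subset\cdots\subset L_r=M^\vee=\tilde I/\tilde J$ of squarefree submodules with $L_i/L_{i-1}\iso S/P_{G_i}(-F_i)$. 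Pulling each $L_i$ back to a monomial ideal $\tilde I_{r-i}$ with $\tilde J\subseteq\tilde I_{r-i}\subseteq\tilde I$ (reindexing so that $L_i=\tilde I_{r-i}/\tilde J$), we get the chain $\tilde J=\tilde I_r\subset\tilde I_{r-1}\subset\cdots\subset\tilde I_0=\tilde I$ with successive quotients $\tilde I_{i-1}/\tilde I_i\iso L_{r-i+1}/L_{r-i}\iso S/P_{G_i}(-F_i)$, which is exactly condition (b). The converse is symmetric using $\tilde{\tilde I}=I$, $\tilde{\tilde J}=J$ and $(M^\vee)^\vee=M$.

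The only genuinely non-formal point is the bookkeeping with the indices and the degree shifts: one has to check that the $F_i,G_i$ attached to the $i$-th step of the original filtration really do reappear (with $F$ and $G$ swapped) at the correspondingly-numbered step of the dual filtration, rather than at some other step. This is controlled by the proof of Theorem~\ref{psdual} via Proposition~\ref{pedual}, where the dual filtration is built from the maps $\beta_i=\alpha_i\circ\cdots\circ\alpha_1$ and $\Ker\beta_i/\Ker\beta_{i-1}\iso\Ker\alpha_i\iso E/P_{F_i\cup G_i}(-F_i)$; tracking this through $\mathbf G$ and through the identification $(J/I)^\vee=\tilde I/\tilde J$ gives precisely the stated indexing $\tilde I_{i-1}/\tilde I_i\iso S/P_{G_i}(-F_i)$. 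I expect the main obstacle to be purely notational — getting the order-reversal of the chain and the $F_i\leftrightarrow G_i$ swap consistent with the indexing in (b) — rather than anything mathematically substantive; no new ideas beyond Theorem~\ref{psdual} and the standard Alexander-duality dictionary for Stanley--Reisner ideals are needed.
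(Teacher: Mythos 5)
Your overall strategy --- specialize Theorem \ref{psdual} to $M=J/I$ and translate submodules of $J/I$ and of $\tilde I/\tilde J$ back into monomial ideals --- is sound and is essentially the paper's route, but the paper organizes it more economically: it reduces at once to the case $r=1$, i.e.\ it applies Theorem \ref{psdual} to each consecutive pair $I_{i-1}\subset I_i$ of squarefree monomial ideals, obtaining $(I_i/I_{i-1})^\vee=\tilde I_{i-1}/\tilde I_i\iso S/P_{G_i}(-F_i)$ directly from the dictionary $(J'/I')^\vee=\tilde I'/\tilde J'$, while the inclusions $\tilde I_i\subset\tilde I_{i-1}$ come for free from the inclusion-reversing property of $L\mapsto\tilde L$. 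That per-step reduction sidesteps entirely the bookkeeping your global argument has to do.

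The one step in your write-up that is asserted rather than proved is the identification of the pulled-back ideals with the Alexander duals. Condition (b) does not say that \emph{some} chain of monomial ideals between $\tilde J$ and $\tilde I$ has quotients $S/P_{G_i}(-F_i)$; it says that the \emph{specific} chain of Alexander duals of the ideals occurring in (a) does. When you write ``pulling each $L_i$ back to a monomial ideal $\tilde I_{r-i}$'', you are using $\tilde I_{r-i}$ merely as a name for the pullback, whereas in the corollary it already denotes the Alexander dual of the ideal $I_{r-i}$ from (a); the quoted sentence is therefore circular as it stands. To close the gap, note that in Proposition \ref{pedual} the map $\beta_i$ is the dual of the inclusion $N_{r-i}\hookrightarrow N$, so by exactness of $(\;)^\vee$ one has $\Ker\beta_i\iso(N/N_{r-i})^\vee$; with $N=\mathbf F(J/I)$ and $N_{r-i}=\mathbf F(I_{r-i}/I)$ this gives $L_i=\mathbf G(\Ker\beta_i)\iso(J/I_{r-i})^\vee=\tilde I_{r-i}/\tilde J$, now with $\tilde I_{r-i}$ the genuine Alexander dual. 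Once that observation is inserted your argument is complete; alternatively, the paper's reduction to $r=1$ avoids the issue altogether.
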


\begin{proof} It is enough to prove the implication (a)\implies (b),
because $\tilde{\tilde{L}}=L$ for any squarefree monomial ideal
$L$. For the proof  we may assume that $r=1$, in other words
$J/I\iso S/P_F(-G)$. In this situation
$\tilde I/\tilde J=(J/I)^\vee\iso S/P_G(-F)$,  by Theorem \ref{psdual}.
\end{proof}

\section{Stanley decompositions and Alexander duality}
In \cite[Conjecture 5.1]{St} Stanley conjectured the following:
let $R$ be a finitely generated $\NN^n$-graded $K$-algebra (where
$R_0=K$ as usual), and let $M$ be a finitely generated
$\ZZ^n$-graded $R$-module. Then there exist finitely many
subalgebras $S_1,\ldots,S_t$ of $R$, each generated by
algebraically independent $\NN^n$-homogeneous elements of $R$, and
there exist $\ZZ^n$-homogeneous elements $m_1,\ldots,m_t$ of $M$,
such that $$M=\Dirsum_{i=1}^t m_iS_i$$ where $\dim S_i\geq\depth
M$ for all $i$, and where $m_iS_i$ is a free $S_i$-module (of rank
one). Moreover, if $K$ is infinite and under a given
specialization to an $\NN$-grading $R$ is generated by $R_1$, then
we can choose the ($\NN^n$-homogeneous) generators of each $S_i$
to lie in $R_1$.

Stanley's conjecture has been studied in several articles, see for
examples  \cite{Ap1}, \cite{Ap2}, \cite{So}, \cite{HSY},
\cite{AhP}, \cite{AnP}, \cite{NR}  and \cite{SX}.

We consider this conjecture in the case that $M$ is a finitely
generated $\ZZ^n$-graded $S$-module, where  $S=K[x_1,\ldots,x_n]$
is the polynomial ring in $n$ variables. Let $m\in M$ be a
homogeneous element and $Z\subset\{x_1,\ldots,x_n\}=X$. We denote
by $mK[Z]$ the $K$-subspace of $M$ generated by all homogeneous
elements of the form $mu$, where $u$ is a monomial in $K[Z]$. The
$K$-subspace $mK[Z]$ is called a {\em Stanley space of dimension
$|Z|$} if $mu\neq 0$ for any nonzero monomial $u\in K[Z]$.
According to \cite{HSY} the Stanley space $mK[Z]$ is called {\em
squarefree} if $m$ is a squarefree homogeneous element and
$\supp(m)\subset\supp(Z)=\{i\:x_i\in Z\}$ .

A decomposition $\mathcal D$ of $M$ as a finite direct sum of
Stanley spaces is called a {\em Stanley decomposition} of $M$. The
Stanley decomposition $\mathcal D$ of $M$  is called a {\em
squarefree Stanley decomposition} if all Stanley spaces in
$\mathcal D$ are squarefree Stanley spaces. The minimal dimension
of a Stanley space in the decomposition $\mathcal D$ is called the
{\em Stanley depth} of $\mathcal D$, denoted $\sdepth ({\mathcal
D})$. We set
$$\sdepth(M)=\max\{\sdepth({\mathcal D})\: {\mathcal D}\; \text{is a
Stanley decomposition of $M$}\}, $$ and call this number the {\em
Stanley depth} of $M$. For a squarefree module $M$ we denote by
$$\sqdepth(M)=\max\{\sdepth({\mathcal D})\: {\mathcal D} \text{ is
a squarefree Stanley decomposition of $M$}\}$$ the {\em squarefree
Stanley depth} of $M$. It is clear that
$\sqdepth(M)\leq\sdepth(M)$. With the above notation  Stanley's
conjecture says that  $ \depth(M)\leq\sdepth(M)$.

It is known  that the number of Stanley space of maximal dimension
is independent of the special Stanley decomposition, see
\cite[1018]{So}. Apel \cite{Ap2} showed  that if $I\subset S$ is a
monomial ideal, then $$\sdepth(S/I)\leq\min\{\dim(S/P)\:
P\in\Ass(S/I)\}.$$ The same result is true for any finitely
generated $\ZZ^n$-graded $S$-module $M$. Indeed, let $\mathcal
D=\Dirsum_{i=1}^tm_iK[Z_i]$ be a Stanley decomposition of $M$ such
that $\sdepth(\mathcal D)=\sdepth(M)$ and $P\in\Ass(M)$ an
associated prime such that $\dim(S/P)=\min\{\dim(S/Q)\:
Q\in\Ass(M)\}$. Since $P\in\Ass(M)$, there exists a nonzero
homogeneous element $m\in M$ such that $P=\Ann(m)$. On the other
hand since $0\neq m\in M$, there exists a unique $1\leq k\leq t$
such that $m\in m_kK[Z_k]$. It is enough to show that $Z_k\sect
P=\emptyset$. Let $m=m_kx^F$ for some $x^F\in K[Z_k]$. Suppose
that $Z_k\sect P\neq\emptyset$, and choose $x_i\in Z_k\sect P$.
Then $m_k(x^Fx_i)=mx_i=0$, a contradiction. This implies that
$|Z_k|\leq \dim(S/P)$. In particular,
$$\sdepth(M)=\sdepth(\mathcal D)\leq\dim(S/P)=\min\{\dim(S/Q)\:
Q\in\Ass(M)\}.$$

Let $I\subset S$ be a monomial ideal and $ I=I_0\subset I_1\subset
\cdots\subset I_r=S$ an $\NN^n$-graded prime filtration of $S/I$
with $I_i/I_{i-1}\iso S/P_{F_i}(-{\mathbf a}_i)$. It was shown in
\cite[page 398]{HP}  that this prime filtration of $S/I$ give us
the Stanley decomposition $S/I=\Dirsum_{i=1}^ru_ik[Z_i]$ of $S/I$,
where $Z_i=\{x_j\: j\not\in F_i\}$, and where
$u_i=x^{\mathbf{a}_i}$. This Stanley decomposition is called the
Stanley decomposition of $S/I$ corresponding to the given prime
filtration. With similar arguments one shows:

\begin{Proposition}
\label{nStanley} Let $M$ be a finitely generated $\ZZ^n$-graded
$S$-module. If $(0)=M_0\subset M_1\subset\cdots\subset M_r=M$ is a
is a prime filtration of $M$ such that $M_i/M_{i-1}\iso
S/P_{F_i}(-\mathbf{a}_i)$, then
$M\iso\Dirsum_{i=1}^rm_iK[Z_{F_i}]$ is a Stanley decomposition of
$M$ where $m_i\in M_{i}$ is a homogeneous element of degree
$\mathbf{a}_i$ such that $(M_{i-1}:_S m_i)=P_{F_i}$ and
$Z_{F_i}=\{x_j\:j\not\in F_i\}$.
\end{Proposition}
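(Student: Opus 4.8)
The plan is to mimic the argument for $S/I$ given in \cite[page 398]{HP}, proceeding by induction on the length $r$ of the prime filtration. The base case $r=1$ is immediate: if $M\iso S/P_{F_1}(-\mathbf a_1)$, then choosing a homogeneous generator $m_1$ of $M$ of degree $\mathbf a_1$ we have $(0:_S m_1)=P_{F_1}$, so $M=m_1K[Z_{F_1}]$ with $Z_{F_1}=\{x_j:j\notin F_1\}$ is a Stanley space and hence a (one-term) Stanley decomposition. For the inductive step, consider the submodule $M_{r-1}\subset M$. By induction applied to the filtration $(0)=M_0\subset\cdots\subset M_{r-1}$ we obtain a Stanley decomposition $M_{r-1}=\Dirsum_{i=1}^{r-1}m_iK[Z_{F_i}]$ with the $m_i$ as described. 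It then remains to produce a homogeneous element $m_r\in M$ of degree $\mathbf a_r$ with $(M_{r-1}:_S m_r)=P_{F_r}$ such that the sum $M_{r-1}+m_rK[Z_{F_r}]$ is direct and equals $M$.

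To do this I would pick a homogeneous element $\bar m_r\in M_r/M_{r-1}=M/M_{r-1}$ that generates this cyclic module and has degree $\mathbf a_r$, and lift it to a homogeneous $m_r\in M$ of the same degree; the annihilator condition $(M_{r-1}:_S m_r)=P_{F_r}$ holds because $M/M_{r-1}\iso S/P_{F_r}(-\mathbf a_r)$ and $\bar m_r$ is its generator. Since $M/M_{r-1}$ is a free $K[Z_{F_r}]$-module on the class of $m_r$ (the monomials $x^{\mathbf b}$ with $\supp(\mathbf b)\subset[n]\setminus F_r$ map to a $K$-basis of $S/P_{F_r}$), every homogeneous element of $M$ can be written uniquely as an element of $M_{r-1}$ plus an element of the form $m_r x^{\mathbf b}$ with $x^{\mathbf b}\in K[Z_{F_r}]$; this gives simultaneously that $m_rK[Z_{F_r}]$ is a Stanley space, that $M=M_{r-1}\oplus m_rK[Z_{F_r}]$, and hence that $M=\Dirsum_{i=1}^r m_iK[Z_{F_i}]$.

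The only delicate point — the "main obstacle", such as it is — is checking that the sum is \emph{direct}, i.e.\ that $M_{r-1}\cap m_rK[Z_{F_r}]=0$. If $0\neq m_r x^{\mathbf b}\in M_{r-1}$ with $x^{\mathbf b}\in K[Z_{F_r}]$, then its image in $M/M_{r-1}$ is zero, i.e.\ $\bar m_r x^{\mathbf b}=0$ in $S/P_{F_r}(-\mathbf a_r)$; but $x^{\mathbf b}$ involves only the variables $x_j$ with $j\notin F_r$, so $x^{\mathbf b}\notin P_{F_r}$, forcing $\bar m_r x^{\mathbf b}\neq 0$, a contradiction. The same computation shows $m_r x^{\mathbf b}\neq 0$ for every nonzero monomial $x^{\mathbf b}\in K[Z_{F_r}]$, so $m_rK[Z_{F_r}]$ is genuinely a Stanley space of dimension $|Z_{F_r}|=n-|F_r|$. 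Assembling these observations completes the induction and the proof.
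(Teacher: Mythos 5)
Your proof is correct and follows essentially the argument the paper intends: the paper omits the proof, deferring to the case $M=S/I$ treated in \cite[page 398]{HP}, and that argument is exactly your induction on the filtration length, splitting off the top quotient $M/M_{r-1}\iso S/P_{F_r}(-\mathbf a_r)$ via a homogeneous lift of its generator. The key verifications (the annihilator condition, the directness of the sum via $K[Z_{F_r}]\cap P_{F_r}=0$, and that $m_rK[Z_{F_r}]$ is a genuine Stanley space) are all handled properly.
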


The following result is a generalization of \cite[Lemma 3.1]{HSY}.
Again we omit the proof because the arguments are analogue to
those  in the proof of \cite[Lemma 3.1]{HSY}.

\begin{Proposition}
\label{squarefree} Let $M$ be a finitely generated $\NN^n$-graded
$S$-module. Then $M$ has a squarefree Stanley decomposition if and
only if $M$ is a squarefree $S$- module.
\end{Proposition}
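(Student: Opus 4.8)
The plan is to prove the two implications separately. The forward direction (squarefree Stanley decomposition $\Rightarrow$ squarefree module) is a direct Hilbert-function and support computation, while the converse is an induction that peels off one squarefree Stanley space at a time; this is parallel to the proof of \cite[Lemma 3.1]{HSY}.

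For ``$M=\Dirsum_{i=1}^t m_iK[Z_i]$ squarefree Stanley decomposition $\Rightarrow$ $M$ squarefree'': fix $\mathbf a\in\NN^n$ and $j\in\supp(\mathbf a)$, and show the multiplication map $\mu\colon M_{\mathbf a}\to M_{\mathbf a+\epsilon_j}$ is bijective. The key elementary observation is that, for each $i$, one has $(m_iK[Z_i])_{\mathbf a}\neq 0$ precisely when $\mathbf a\geq\deg(m_i)$ componentwise and $\supp(\mathbf a)\subseteq\supp(Z_i)$, in which case this graded piece is one-dimensional, spanned by $m_iu$ for the unique monomial $u\in K[Z_i]$ of degree $\mathbf a-\deg(m_i)$. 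Using that $j\in\supp(\mathbf a)$ and that $(\deg m_i)_j\leq 1$ because $m_i$ is squarefree, one checks that this condition holds for $\mathbf a$ if and only if it holds for $\mathbf a+\epsilon_j$; hence exactly the same summands are nonzero in degrees $\mathbf a$ and $\mathbf a+\epsilon_j$. On each such summand $x_j\in Z_i$ (since $j\in\supp(\mathbf a)\subseteq\supp(Z_i)$), so $\mu$ is block diagonal with respect to the decomposition and restricts on the $i$-th block to the map $m_iu\mapsto m_i(x_ju)$ between one-dimensional spaces, which is an isomorphism. Thus $\mu$ is a direct sum of isomorphisms, so $M$ is squarefree.

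For the converse I induct on the finite number $d(M)=\sum_{F\subseteq[n]}\dim_K M_F$. If $d(M)=0$ then $M=0$ and the empty decomposition works. Otherwise choose $F\subseteq[n]$ inclusion-maximal with $M_F\neq 0$ and pick $0\neq m\in M_F$; by maximality $x_jm\in M_{F\cup\{j\}}=0$ for every $j\notin F$. Since $M$ is squarefree, multiplication by any monomial supported in $F$ is bijective on $M_F$, hence $mu\neq 0$ for every monomial $u\in K[x_i\colon i\in F]$, and together with $x_jm=0$ for $j\notin F$ this shows $N:=Sm=mK[x_i\colon i\in F]$ is a squarefree Stanley space with $N\iso S/P_{[n]\setminus F}(-F)$; in particular $N$ is a squarefree submodule of $M$ with $N_G\neq 0$ exactly for $G=F$. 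By \cite[Lemma 2.3]{Y} the quotient $M/N$ is again squarefree, and $d(M/N)=d(M)-1<d(M)$, so by induction $M/N=\Dirsum_k\bar m_kK[Z_k]$ is a squarefree Stanley decomposition. Lifting each $\bar m_k$ to a homogeneous $m_k\in M$ of the same (squarefree) degree, the graded exact sequence $0\to N\to M\to M/N\to 0$ yields $M=N\dirsum\Dirsum_k m_kK[Z_k]=mK[x_i\colon i\in F]\dirsum\Dirsum_k m_kK[Z_k]$, and $\supp(m_k)=\supp(\bar m_k)\subseteq\supp(Z_k)$, so this is a squarefree Stanley decomposition of $M$.

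The routine parts are the two support computations; the one place needing a little care is the inductive step of the converse, namely checking that the cyclic submodule generated by a top-degree squarefree element $m$ is genuinely a squarefree Stanley space isomorphic to $S/P_{[n]\setminus F}(-F)$ — here one uses that squarefreeness of $M$ forces the ``upward'' multiplications by the variables $x_i$, $i\in F$, to be bijective. Alternatively, the existence of such an $N$ is exactly the assertion that a squarefree $S$-module admits a prime filtration inside $SQ(S)$, and one may instead invoke Proposition~\ref{nStanley} applied to such a filtration, using Lemma~\ref{notclear}(a) to see that every Stanley space produced is squarefree.
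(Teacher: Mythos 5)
Your proof is correct, and since the paper omits the argument entirely (deferring to the analogue in \cite[Lemma 3.1]{HSY}), your write-up serves as a complete substitute rather than a variant of a printed proof. The forward direction is the expected degree-by-degree verification: the equivalence ``$(m_iK[Z_i])_{\mathbf a}\neq 0 \iff (m_iK[Z_i])_{\mathbf a+\epsilon_j}\neq 0$'' for $j\in\supp(\mathbf a)$ really does use both hypotheses of squarefreeness of the Stanley space ($m_i$ squarefree for the inequality $\mathbf a\geq\deg m_i$, and $\supp(m_i)\subseteq\supp(Z_i)$ to replace $\supp(\mathbf a-\deg m_i)$ by $\supp(\mathbf a)$), and you invoke both, so the block-diagonal bijectivity argument goes through. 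For the converse, your induction on $d(M)=\sum_F\dim_K M_F$ is in effect a simultaneous proof of Yanagawa's filtration theorem (\cite[Proposition 2.5]{Y}) and of the passage from such a filtration to a Stanley decomposition; the paper's subsequent Remark points out exactly this route (Yanagawa plus Proposition~\ref{nStanley}) for that implication, so your alternative closing paragraph matches the intended argument. Two small points you rely on implicitly and should make explicit: the base case $d(M)=0\Rightarrow M=0$ uses the stated fact that a squarefree module is generated by its squarefree components; and in the lifting step the directness of $M=N\dirsum\bigoplus_k m_kK[Z_k]$ follows from the graded dimension count $\dim M_{\mathbf a}=\dim N_{\mathbf a}+\dim(M/N)_{\mathbf a}$ together with the fact that $\pi$ carries $\sum_k(m_kK[Z_k])_{\mathbf a}$ onto $\bigoplus_k(\bar m_kK[Z_k])_{\mathbf a}$. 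Neither is a gap, but spelling them out would make the induction airtight.
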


\begin{Remark} {\em In \cite[Proposition 2.5]{Y} Yanagawa proved that an
$\NN^n$-graded $S$-module $M$ is squarefree if and only if there
is a filtration of $\NN^n$-graded submodules $0\subset
M_1\subset\ldots\subset M_r=M$ of $M$ such that each quotient
$M_i/M_{i-1}\iso S/P_{F_i^c}(-F_i)$ for some $F_i\subset[n]$ where
$F_i^c=[n]\setminus F_i$. Yanagawa's result and Proposition
\ref{nStanley} implies one direction of Proposition
\ref{squarefree}.}
\end{Remark}

As a generalization of \cite[Theorem 3.3]{HSY} we have the
following. Again the same arguments like in the proof of
\cite[Theorem 3.3]{HSY} work also here.
\begin{Theorem}\label{sqdepth}
Let $M$ be an $\NN^n$-graded squarefree $S$-module. Then
$\sqdepth(M)=\sdepth(M)$.
\end{Theorem}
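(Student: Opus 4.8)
The plan is to prove $\sqdepth(M) = \sdepth(M)$ for a squarefree $S$-module $M$ by showing that any Stanley decomposition of $M$ can be refined or modified into a squarefree Stanley decomposition with Stanley depth at least as large. Since the inequality $\sqdepth(M) \leq \sdepth(M)$ is already noted to be trivial (every squarefree Stanley decomposition is in particular a Stanley decomposition), the whole content is the reverse inequality $\sdepth(M) \leq \sqdepth(M)$: given a Stanley decomposition $\mathcal D = \Dirsum_{i=1}^t m_i K[Z_i]$ with $\sdepth(\mathcal D) = \sdepth(M)$, I must produce a \emph{squarefree} Stanley decomposition $\mathcal D'$ with $\sdepth(\mathcal D') \geq \sdepth(\mathcal D)$.

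\textbf{Reduction to a single Stanley space.} The key observation is that each summand $m_iK[Z_i]$, viewed on its own, need not be squarefree, but because $M$ is a squarefree module the "bad" part of each $m_i$ (the coordinates of $\deg m_i$ that are $\geq 2$, or in $\supp(m_i)$ but not in $\supp(Z_i)$) is heavily constrained. The plan is to argue summand by summand: fix an index $i$ and write $\deg m_i = \mathbf a = (a_1,\dots,a_n)$. Using the squarefreeness of $M$ (the multiplication maps $M_{\mathbf b}\to M_{\mathbf b + \epsilon_j}$ are bijective for $j\in\supp(\mathbf b)$), one shows there is a squarefree element $w_i$ with $\supp(w_i) = \supp(\mathbf a)$ and a monomial $x^{\mathbf c}$ with $m_i = x^{\mathbf c} w_i$, where $\mathbf c$ is supported on $\supp(\mathbf a)$; concretely $w_i$ is "$m_i$ with all exponents pushed down to $0$ or $1$," and bijectivity of the multiplication maps guarantees $w_i\neq 0$ and $m_i = x^{\mathbf c}w_i$. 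Then $m_iK[Z_i] = x^{\mathbf c}\bigl(w_iK[Z_i]\bigr)$, and one checks $w_iK[Z_i]$ is again a Stanley space (injectivity along the variables in $Z_i$ follows since $m_iK[Z_i]$ was a Stanley space together with the squarefree bijections). Now $w_iK[Z_i]$ may still fail to be squarefree only because $\supp(w_i) = \supp(\mathbf a)$ need not be contained in $\supp(Z_i)$. If $\supp(w_i)\subset\supp(Z_i)$ we are done with this summand. Otherwise, pick $x_j$ with $j\in\supp(w_i)\setminus\supp(Z_i)$ and split off $x_j$: write $w_iK[Z_i]$ in terms of $w_i' := w_i/x_j$ via a direct-sum decomposition analogous to $K[Z_i] = K[Z_i\cup\{x_j\}] \;\big/\!\!\sim$, i.e. $w_iK[Z_i] = w_i'K[Z_i\cup\{x_j\}] \ominus w_i'K[Z_i\cup\{x_j\}]_{x_j\text{-free}}$. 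More precisely, use the elementary identity that for a Stanley space $v K[W]$ and a variable $x_j\notin W$ with $vx_j \neq 0$ one can rewrite $vK[W] \oplus vx_j K[W] = vK[W\cup\{x_j\}]$ run in reverse: $vK[W\cup\{x_j\}] = vK[W]\oplus vx_jK[W]$. Applying this repeatedly (as in the proof of \cite[Lemma 3.1]{HSY}) peels off the offending variables one at a time; each step \emph{increases} the ambient variable set of the main piece, hence does not decrease the dimension of that Stanley space, while the extra pieces $w_i x_j K[\,\cdot\,]$ can be handled inductively (their analogue of $|\supp(w_i)\setminus\supp(Z_i)|$ has dropped, so the induction terminates).

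\textbf{Assembling the global decomposition.} Carrying out the above for every $i = 1,\dots,t$ yields, for each $i$, a finite squarefree Stanley decomposition of $x^{\mathbf c_i} w_i K[Z_i] = m_i K[Z_i]$, each of whose Stanley spaces has dimension $\geq |Z_i|$. Wait — one subtlety: after multiplying back by $x^{\mathbf c_i}$, are the resulting spaces squarefree? They are squarefree if $\mathbf c_i = 0$; in general $m_i$ itself is not squarefree, so one must instead argue that the squarefree decomposition of $w_i K[Z_i]$, once \emph{transported through the isomorphism} $w_iK[Z_i]\cong x^{\mathbf c_i}w_iK[Z_i] = m_iK[Z_i]$ of graded vector spaces, can be re-expressed: the point is that each squarefree generator $v$ appearing with its set $W$ and satisfying $\supp(v)\subset\supp(W)$ gives a squarefree Stanley space $vK[W]\subset M$ (after re-embedding), and collecting all of these over all $i$ gives $M = \Dirsum vK[W]$, a squarefree Stanley decomposition of $M$ — this is exactly where Proposition \ref{squarefree} is used, guaranteeing such a decomposition exists and that the pieces fit together into a direct sum. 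Since every $|W|\geq |Z_i|\geq \sdepth(\mathcal D) = \sdepth(M)$, we get $\sqdepth(M)\geq \sdepth(M)$, and combined with the trivial inequality, equality holds.

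\textbf{The main obstacle.} The genuinely delicate point is the bookkeeping in the summand-by-summand refinement: one must verify that the peeling-off procedure (a) always terminates — handled by a double induction, outer on $|\supp(w_i)\setminus \supp(Z_i)|$ and inner on $|Z_i|$ or on $n$; (b) produces honest Stanley \emph{spaces} at each stage, i.e. the relevant multiplication maps remain injective, which rests squarely on the defining bijections of a squarefree module and must be invoked carefully for the intermediate, not-yet-squarefree pieces $w_i x_j K[W]$; and (c) that the union of all the little decompositions is actually a \emph{direct} sum equal to $M$ and not merely a spanning set — this is inherited from $\mathcal D$ being a direct sum, since each $m_iK[Z_i]$ is being decomposed internally. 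Since all of (a)--(c) are "analogue to the proof of \cite[Lemma 3.1]{HSY}" as the paper states, I would present the argument at the level of the reduction and the peeling identity, and refer to \cite{HSY} for the routine verifications.
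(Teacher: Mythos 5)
Your proposal has a genuine gap, and in fact the core strategy---refining each Stanley space $m_iK[Z_i]$ \emph{internally} into squarefree Stanley spaces---cannot work. Consider $M=K[x]$ with the Stanley decomposition $K\dirsum xK\dirsum x^2K[x]$. The summand $x^2K[x]$ contains no squarefree element of $M$ at all, and the summand $xK$ contains no squarefree Stanley space: a squarefree Stanley space $vK[W]$ with $0\neq v\in xK$ requires $x\in W$ by the support condition, and then $vK[W]\supseteq vK[x]\not\subseteq xK$. So neither summand admits an internal squarefree refinement. Your attempted rescue---replace $m_i$ by the squarefree $w_i$ with $m_i=x^{\mathbf c}w_i$ and ``transport''---changes the subspace and destroys directness: replacing $x^2K[x]$ by $xK[x]$ collides with the summand $xK$. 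The appeal to Proposition \ref{squarefree} at exactly this point is not an argument; that proposition only asserts that \emph{some} squarefree Stanley decomposition of $M$ exists, not that your locally produced pieces form a direct sum. (A secondary slip: the splitting identity is $vK[W\cup\{x_j\}]=vK[W]\dirsum vx_jK[W\cup\{x_j\}]$, not $vK[W]\dirsum vx_jK[W]$, and the ``$\ominus$'' manipulation it feeds into never yields a decomposition of the original space.)

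The argument the paper relies on (the proof of \cite[Theorem 3.3]{HSY}, to which it defers) is global rather than summand-by-summand: starting from $\mathcal D=\Dirsum_{i=1}^t m_iK[Z_i]$ with $\sdepth(\mathcal D)=\sdepth(M)$, one \emph{discards} every summand whose generator $m_i$ is not squarefree, and for each remaining $i$ \emph{enlarges} $Z_i$ to $W_i=Z_i\cup\{x_j\: j\in\supp(m_i)\}$. One then checks degree by degree that $M=\Dirsum_i m_iK[W_i]$: on a squarefree degree $F$ only the retained summands contribute and they contribute exactly as before, while on a non-squarefree degree $\mathbf a$ with $\supp(\mathbf a)=F$ the bijectivity of the multiplication map $M_F\to M_{\mathbf a}$ carries the decomposition of $M_F$ to one of $M_{\mathbf a}$, matching $(m_iK[W_i])_{\mathbf a}$. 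Since $|W_i|\geq|Z_i|$, the Stanley depth does not drop. The discarded summands are absorbed by the enlargement of the others---this compensation across summands is precisely what your local refinement scheme cannot reproduce.
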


Let $E=K\langle e_1,\ldots, e_n\rangle$ be the exterior algebra
over an $n$-dimensional $K$-vector space $V$ and  $N$ a finitely
generated $\NN^n$-graded $E$-module. Let $n\in N$ be a homogeneous
element and $A\subset\{e_1,\ldots,e_n\}$. We set
$\supp(n)=\supp(\deg(n))$ and $\supp(A)=\{j\: e_j\in A\}$. We
denote by $nK\langle A\rangle$ the the $K$-subspace of $N$
generated by all homogeneous elements of the form $ne_F$, where
$e_F\in K\langle A\rangle$. If the elements $ne_F$ with $F\in
\supp(A)$ form a $K$-basis of $nK\langle A\rangle$, then we call
$nK\langle A\rangle $ a {\em Stanley space of dimension $|A|$}.

In case $N$ is a squarefree and  $nK\langle A\rangle\subset N$ is
a Stanley space we have that  $\supp(n)$ is squarefree and
$\supp(n)\sect \supp(A)=\emptyset$. A direct sum
$N=\Dirsum_{i=1}^tn_iK\langle A_i\rangle$  with Stanley spaces
$n_iK\langle A_i\rangle$ is called a {\em Stanley decomposition}
of $N$.

\begin{Proposition}
\label{Edual} Let $N$ be a squarefree  $E$-module, and $N^\vee$
the $E$-dual of $N$. Then there exists a Stanley decomposition
$N=\Dirsum_{i=1}^tn_iK\langle A_i\rangle$ of $N$ if and only if
there exists a  Stanley decomposition
$N^\vee=\Dirsum_{i=1}^tb_iK\langle A_i\rangle$ of $N^\vee$ with
$$\supp(b_i)=[n]\setminus(\supp(A_i)\cup\supp(n_i)).$$
\end{Proposition}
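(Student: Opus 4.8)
The plan is to reduce the statement to a bijective correspondence between the Stanley spaces in a decomposition of $N$ and those in a decomposition of $N^\vee$, using the isomorphism $\eta: N^\vee \to N^\ast$ from Theorem~\ref{importent} to make the $E$-dual computationally tractable. Since $(N^\vee)^\vee = N$, it suffices to prove one implication; so I would assume $N = \Dirsum_{i=1}^t n_iK\langle A_i\rangle$ is a Stanley decomposition and produce the dual one. The key observation is that a single Stanley space $nK\langle A\rangle$ inside a squarefree $E$-module is (as an $\NN^n$-graded vector space) isomorphic to $E/P_{\supp(n)\cup\supp(A)}(-\supp(n))$ restricted to its squarefree part: its graded components are exactly the $F$ with $\supp(n)\subseteq F\subseteq \supp(n)\cup\supp(A)$, each one-dimensional. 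Dualizing such a piece via $()^\ast = \Hom_K(-,K(-\mathbf 1))$ sends the component in degree $F$ to the component in degree $\mathbf 1 - F$; so the support of the dual piece ranges over all $G$ with $\mathbf 1 - (\supp(n)\cup\supp(A)) \subseteq G \subseteq \mathbf 1 - \supp(n)$, i.e. $[n]\setminus(\supp(n)\cup\supp(A)) \subseteq G \subseteq [n]\setminus\supp(n)$. That is precisely the shape of a Stanley space $b_iK\langle A_i\rangle$ with $\supp(b_i) = [n]\setminus(\supp(A_i)\cup\supp(n_i))$ and $\supp(A_i)$ unchanged (since $|[n]\setminus\supp(n_i)| - |[n]\setminus(\supp(n_i)\cup\supp(A_i))| = |A_i|$).

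First I would make precise the description of a squarefree Stanley space as a graded vector space and record that $()^\ast$ is exact and commutes with finite direct sums, so that $N^\ast = \Dirsum_{i=1}^t (n_iK\langle A_i\rangle)^\ast$ as $\NN^n$-graded vector spaces. Next I would identify, for each $i$, the dual summand $(n_iK\langle A_i\rangle)^\ast$ with a Stanley space $b_iK\langle A_i\rangle$: choose $b_i$ to be the basis element dual to $n_ie_{A_i}$ (the ``top'' of the $i$-th space), which has degree $\mathbf 1 - (\supp(n_i)\cup\supp(A_i))$, hence $\supp(b_i) = [n]\setminus(\supp(A_i)\cup\supp(n_i))$; then check that multiplication by $e_F$ for $F\subseteq\supp(A_i)$ identifies the remaining basis vectors correctly, so the $b_ie_F$ do form a $K$-basis and the space is a genuine Stanley space. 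Then I would transport everything back along $\eta: N^\vee \xrightarrow{\sim} N^\ast$ to conclude that $N^\vee = \Dirsum_{i=1}^t b_iK\langle A_i\rangle$ with the asserted supports.

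The main obstacle I anticipate is not the vector-space bookkeeping but verifying that the direct-sum decomposition of $N^\vee$ obtained this way is a decomposition \emph{as $E$-modules compatible with the Stanley-space structure} — that is, that the $E$-action on each dual summand really is the free action of $K\langle A_i\rangle$ and that no signs or cross-terms spoil the identification of $\Hom_K$-duals with honest submodules. This is where the functorial isomorphism $\eta$ of Theorem~\ref{importent} does the heavy lifting: it already packages the $E$-module structure on $N^\vee$ correctly, so I only need to check that $\eta$ carries the summand of $N^\ast$ spanned by the duals of $\{n_ie_F : F\subseteq\supp(A_i)\}$ onto an $E$-submodule of $N^\vee$ of the required Stanley-space shape, which follows because $\eta(\phi) = \phi_{[n]}$ is $E$-linear and the grading is respected. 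I would also remark that the supports automatically satisfy $\supp(b_i)\cap\supp(A_i) = \emptyset$, so the $b_iK\langle A_i\rangle$ are legitimate squarefree Stanley spaces, completing the proof; the reverse implication is then immediate by applying the forward one to $N^\vee$ and using $(N^\vee)^\vee = N$ together with $[n]\setminus([n]\setminus(\supp(A_i)\cup\supp(n_i)) \cup \supp(A_i)) = \supp(n_i)$.
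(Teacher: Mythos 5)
Your proposal is correct and follows essentially the same route as the paper: reduce to $N^\ast$ via the isomorphism $\eta$ of Theorem~\ref{importent}, split $N^\ast$ into the duals of the individual Stanley spaces, and take $b_i$ to be the dual basis vector of the top element $n_ie_{A_i}$, checking that $b_ie_H=\pm(n_ie_{\supp(A_i)\setminus H})^\ast$ so that each dual summand is again a Stanley space $b_iK\langle A_i\rangle$ with the stated support. The paper's proof is exactly this, with the sign/cross-term verification you flag written out explicitly.
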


\begin{proof}
By Theorem \ref{importent} we have $N^\vee\iso
N^\ast=\Hom_K(N,K(-\mathbf 1))$. Hence we will show the assertion
for $N^\ast$. Since $N=\Dirsum_{i=1}^tn_iK\langle A_i\rangle$, as
an $\NN^n$-graded $K$-vector space one has
$N^\ast=\Dirsum_{i=1}^t(n_iK\langle A_i\rangle)^\ast$. Set
$\supp(n_i)= F_i$ and $\supp(A_i)=G_i$. Then $F_i\cap
G_i=\emptyset$ and the elements $n_ie_{H}$ with
$H\subseteq G_i$ form a $K$-basis of $n_iK\langle A_i\rangle$.
Consequently, the dual  elements $(n_ie_{H})^\ast$ form a $K$-basis of  $(n_iK\langle
A_i\rangle)^\ast$.

Let $b_i=(n_ie_{G_i})^\ast$ and $H,L\subseteq G_i$. Then
\[
(b_ie_H)(n_ie_L)=\pm b_i(n_ie_Le_H)=
\begin{cases}
0,\quad\text{if}\quad L\neq G_i\setminus H,\\
\pm 1,\quad\text{if}\quad L=G_i\setminus H,
\end{cases}
\]
and for any  $j\neq i$ and all $T\subset G_j$ one has
$(b_ie_H)(n_je_T)=\pm b_i(n_je_Te_H)=0$. This shows that
$b_ie_H=\pm(n_ie_{G_i\setminus H})^\ast$ for any $H\subset G_i$.
Therefore $(n_iK\langle A_i\rangle)^\ast=b_iK\langle A_i\rangle$
and $N^\ast=\Dirsum_{i=1}^tb_iK\langle A_i\rangle$.
\end{proof}

Let $M$ be a squarefree $S$-module and let $N$ be its
corresponding squarefree $E$-module. In Section~1 we showed that
there is an isomorphism  $\theta\: M_{\sq}\to N$ of graded
$K$-vector spaces. We will use this isomorphism to  describe
in the  next lemma the relationship between squarefree Stanley
decompositions of $M$ and Stanley decompositions of $N$.

\begin{Lemma}
\label{relation} {\em (a)} Let $M=\Dirsum_{i=1}^tm_iK[Z_i]$ be a
squarefree Stanley decomposition of $M$ and $$A_i=\{e_j\: j\in
\supp(Z_i)\setminus \supp(m_i)\}.$$ Then
$N=\Dirsum_{i=1}^tn_iK\langle A_i\rangle$ is a Stanley
decomposition of $N$, where $n_i=\theta(m_i)\in N$ for
$i=1,\ldots,t$.\\
{\em (b)} Conversely, if $N=\Dirsum_{i=1}^tn_iK\langle A_i\rangle$
is a Stanley decomposition of $N$ and $$Z_i=\{x_j\: j\in
\supp(A_i)\union \supp(n_i)\}.$$ Then $M=\Dirsum_{i=1}^tm_iK[Z_i]$
is a squarefree Stanley decomposition of $M$, where
$m_i=\theta^{-1}(n_i)\in M$ for $i=1,\ldots,t$.
\end{Lemma}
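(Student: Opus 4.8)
The plan is to use the isomorphism $\theta\colon M_{\sq}\to N$ of graded $K$-vector spaces together with the explicit description of $\theta$ provided by Lemma~\ref{alsoterrible}, and to check the two compatibility requirements for a direct sum decomposition: that the pieces $n_iK\langle A_i\rangle$ (resp.\ $m_iK[Z_i]$) span, and that the sum is direct. For part~(a), I would first observe that since $M=\Dirsum_{i=1}^t m_iK[Z_i]$ is a squarefree Stanley decomposition, each $m_i$ is a squarefree element, $\supp(m_i)\subseteq\supp(Z_i)$, and every squarefree element of $M$ lies in the span of the squarefree elements $m_ix_L$ with $L\subseteq\supp(Z_i)\setminus\supp(m_i)$. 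Setting $n_i=\theta(m_i)$, the key computation is that $\theta(m_ix_L)=\pm n_ie_L$: this is exactly Lemma~\ref{alsoterrible} applied to $m=m_ix_L$ with the trivial decomposition (one summand, $w_1=m_i$, $L_1=L$), giving $\theta(m_ix_L)=(-1)^{\sigma(\supp(m_i),L)}\,n_ie_L$. Because $L\subseteq\supp(Z_i)\setminus\supp(m_i)=\supp(A_i)$, the element $e_L$ indeed lies in $K\langle A_i\rangle$, and $n_ie_L\neq 0$ since $m_ix_L\neq 0$ (the $\theta_F$ are isomorphisms). Hence each $n_iK\langle A_i\rangle$ is a Stanley space and $\theta$ carries $m_iK[Z_i]\cap M_{\sq}$ bijectively onto $n_iK\langle A_i\rangle$.

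Now applying $\theta$ to the decomposition $M_{\sq}=\Dirsum_{i=1}^t\big(m_iK[Z_i]\big)_{\sq}$ (the squarefree parts of a direct sum decompose as the direct sum of the squarefree parts), and using that $\theta$ is a $K$-vector space isomorphism $M_{\sq}\to N$, yields $N=\Dirsum_{i=1}^t n_iK\langle A_i\rangle$ as a direct sum of Stanley spaces. Part~(b) is then proved by running the same argument in reverse: given $N=\Dirsum_{i=1}^t n_iK\langle A_i\rangle$, set $m_i=\theta^{-1}(n_i)$; since $N$ is squarefree and $n_iK\langle A_i\rangle$ is a Stanley space, $\supp(n_i)$ is squarefree with $\supp(n_i)\cap\supp(A_i)=\emptyset$, so $m_i$ is a squarefree element of $M$ with $\supp(m_i)=\supp(n_i)\subseteq\supp(Z_i)$, showing that $m_iK[Z_i]$ is a squarefree Stanley space. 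The same identity $\theta(m_ix_L)=\pm n_ie_L$ for $L\subseteq\supp(A_i)$ shows $\theta^{-1}$ carries $n_iK\langle A_i\rangle$ onto $(m_iK[Z_i])_{\sq}$, and since $M$ is squarefree it is generated by its squarefree parts, so the $m_iK[Z_i]$ already exhaust $M$; directness again transfers across the isomorphism $\theta$.

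The main obstacle I expect is bookkeeping rather than anything conceptual: one must be careful that in part~(b) the equality $M=\Dirsum m_iK[Z_i]$ holds as $S$-modules and not merely that the squarefree parts agree. This is where Proposition~\ref{squarefree} (or the fact that a squarefree module is determined by, and generated by, its squarefree components together with the bijectivity of multiplication by $x_i$ on the support) is needed: once the squarefree parts of $M$ decompose as $\Dirsum (m_iK[Z_i])_{\sq}$ and each $m_iK[Z_i]$ is a genuine Stanley space inside the squarefree module $M$, the squarefree structure forces $M=\Dirsum m_iK[Z_i]$. A secondary point to handle cleanly is the sign $(-1)^{\sigma(\supp(m_i),L)}$ in $\theta(m_ix_L)=\pm n_ie_L$ — it is harmless because it only rescales basis elements within a single Stanley space, but it should be noted explicitly so that "$\theta(m_iK[Z_i]) = n_iK\langle A_i\rangle$" is justified as an equality of $K$-subspaces.
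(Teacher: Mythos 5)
Your proposal is correct and follows essentially the same route as the paper: both transfer the $K$-basis $\{m_ix_F\: F\subset\supp(A_i)\}$ of $M_{\sq}$ through $\theta$ via the identity $\theta(m_ix_F)=(-1)^{\sigma(\supp(m_i),F)}n_ie_F$ from Lemma~\ref{alsoterrible}, and in part (b) both reduce the non-vanishing and linear independence of the $m_ix^{\mathbf a}$ to the squarefree case using bijectivity of the multiplication maps on the support. The paper merely makes explicit the factorization $x^{\mathbf a}=x^{\mathbf a'}x^{\mathbf b}$ with $\supp(\mathbf b)\subset\supp(A_i)$ and $\supp(\mathbf a')\subset\supp(\theta^{-1}(n_ie_F))$ that you gesture at in your closing paragraph.
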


\begin{proof} (a): Since $M=\Dirsum_{i=1}^tm_iK[Z_i]$, one has
$$\Union_{i=1}^t\{m_ix_{F}\: F\subset
\supp(A_i)\}$$ forms a $K$-basis of $M_{\sq}$, and hence
$$\Union_{i=1}^t\{\theta(m_ix_{F})\: F\subset \supp(A_i)\}$$ forms
a $K$-basis of $N$. By Lemma \ref{alsoterrible} we have
$\theta(m_ix_{F})=(-1)^{\sigma(G_i,F)}n_ie_F$, where
$G_i=\supp(m_i)$.  Therefore
\[
\Union_{i=1}^t\{n_ie_F\: F\subset A_i\}
\]
forms a $K$-basis of $N$.

(b): Let $x^{\mathbf a}\in K[Z_i]$. We can write $x^{\mathbf
a}=x^\mathbf {a'}x^{\mathbf b}$ where ${\mathbf b}\in\NN^n$ is a
squarefree vector with $F=\supp(\mathbf {b})\subset \supp(A_i)$.
Then
\[
m_ix^\mathbf{a}=(m_ix^\mathbf{b})x^\mathbf{a'}=(-1)^{\sigma(G_i,F)}\theta^{-1}(n_ie_F)x^\mathbf{a'}.
\]
Since $\theta^{-1}(n_ie_F)\neq 0$ and since $M$ is squarefree and
$\supp(\mathbf{a'})\subset\supp(\theta^{-1}(n_ie_F))$, one has
$m_ix^\mathbf{a}\neq 0$. Therefore
\[
\Union_{i=1}^t\{m_ix^\mathbf{a}\: x^\mathbf{a}\in K[Z_i]\}
\]
forms a $K$-basis of $M$.
\end{proof}

 Now we will present the main result of this section.

\begin{Theorem}
 \label{dual}
Let $M$ be a squarefree $S$-module, and $M^\vee$ its Alexander
dual. Then there exists a squarefree Stanley decomposition
$M=\Dirsum_{i=1}^tm_iK[Z_i]$  of $M$ if and only if there exists a
squarefree Stanley decomposition $M^\vee=\Dirsum_{i=1}^tv_iK[W_i]$
 of $M^\vee$ with $\supp(v_i)=[n]\setminus\supp(Z_i)$
and $W_i=~\{x_j\:j\in[n]\setminus\supp(m_i)\}$.
\end{Theorem}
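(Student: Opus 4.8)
The plan is to transport the statement through the equivalence of categories $SQ(S)\simeq SQ(E)$, reducing it to the corresponding statement for the exterior algebra, which is Proposition~\ref{Edual}, and then to match up the combinatorial data on both sides using Lemma~\ref{relation}. Concretely, let $N=\mathbf F(M)$ be the squarefree $E$-module corresponding to $M$, so that $M^\vee=\mathbf G(N^\vee)$ by definition of the Alexander dual. Suppose we are given a squarefree Stanley decomposition $M=\Dirsum_{i=1}^t m_iK[Z_i]$. First I would apply Lemma~\ref{relation}(a) to obtain a Stanley decomposition $N=\Dirsum_{i=1}^t n_iK\langle A_i\rangle$ of $N$, where $n_i=\theta(m_i)$ and $A_i=\{e_j\:j\in\supp(Z_i)\setminus\supp(m_i)\}$; note that, since the decomposition of $M$ is squarefree, $\supp(m_i)\subseteq\supp(Z_i)$, so $\supp(A_i)=\supp(Z_i)\setminus\supp(m_i)$ and $\supp(n_i)=\supp(m_i)$.

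Next I would feed this into Proposition~\ref{Edual}: it gives a Stanley decomposition $N^\vee=\Dirsum_{i=1}^t b_iK\langle A_i\rangle$ with $\supp(b_i)=[n]\setminus(\supp(A_i)\cup\supp(n_i))$. Observe that $\supp(A_i)\cup\supp(n_i)=(\supp(Z_i)\setminus\supp(m_i))\cup\supp(m_i)=\supp(Z_i)$, so $\supp(b_i)=[n]\setminus\supp(Z_i)$, which is exactly the support condition we want for $v_i$. Then I would apply Lemma~\ref{relation}(b) to the Stanley decomposition of $N^\vee$ to get back a squarefree Stanley decomposition $M^\vee=\Dirsum_{i=1}^t v_iK[W_i]$ of $M^\vee=\mathbf G(N^\vee)$, with $v_i=\theta^{-1}(b_i)$ (where now $\theta$ is the isomorphism attached to $M^\vee$) and $W_i=\{x_j\:j\in\supp(A_i)\cup\supp(b_i)\}$. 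Since $\theta$ preserves supports on squarefree elements, $\supp(v_i)=\supp(b_i)=[n]\setminus\supp(Z_i)$, giving the first asserted condition. For the second, I compute $\supp(A_i)\cup\supp(b_i)=(\supp(Z_i)\setminus\supp(m_i))\cup([n]\setminus\supp(Z_i))=[n]\setminus\supp(m_i)$, which yields $W_i=\{x_j\:j\in[n]\setminus\supp(m_i)\}$ exactly as claimed.

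Finally, the converse direction follows by the same argument applied to $M^\vee$ in place of $M$, using that $(M^\vee)^\vee=M$ since $\mathbf F,\mathbf G$ are mutually inverse and $(\ )^\vee$ is an involution on $SQ(E)$ via Theorem~\ref{importent}; one checks that the roles of $(m_i,Z_i)$ and $(v_i,W_i)$ are symmetric under the stated conditions, so running the chain backwards reproduces the original decomposition of $M$ up to the bookkeeping already done.

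The main obstacle, and the only place where care is genuinely needed, is the bookkeeping of supports through the three maps: one must be sure that $\theta$ and $\theta^{-1}$ take a squarefree generator $m_i$ to an element $n_i$ with $\supp(n_i)=\supp(m_i)$ (this is contained in the construction recalled in Section~1, since $\theta_{G_i}(m_i)=n_i$ with $\deg m_i=G_i$), and that the index set of generators is preserved bijectively at each step — which is guaranteed because all three of Lemma~\ref{relation}(a), Proposition~\ref{Edual}, and Lemma~\ref{relation}(b) output a decomposition with the same number $t$ of Stanley spaces and the same support data $A_i$ attached to the $i$-th piece. Once these identifications are in place, the two set-theoretic computations $\supp(A_i)\cup\supp(n_i)=\supp(Z_i)$ and $\supp(A_i)\cup\supp(b_i)=[n]\setminus\supp(m_i)$ are immediate, and the proof is complete.
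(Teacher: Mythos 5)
Your proposal is correct and follows exactly the paper's own route: reduce to the exterior algebra via $\mathbf F$, apply Lemma~\ref{relation}(a), then Proposition~\ref{Edual}, then Lemma~\ref{relation}(b), and conclude the converse from $(M^\vee)^\vee=M$. The only difference is that you write out the two set-theoretic identifications $\supp(A_i)\cup\supp(n_i)=\supp(Z_i)$ and $\supp(A_i)\cup\supp(b_i)=[n]\setminus\supp(m_i)$ explicitly, which the paper leaves implicit.
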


\begin{proof} Let $M=\Dirsum_{i=1}^tm_iK[Z_i]$ be a squarefree Stanley decomposition
of $M$. If we set $F_i=\supp(m_i)$ and $G_i=\supp(Z_i)\setminus
F_i $, then $F_i\cap G_i=\emptyset$. Let $N$ be the squarefree
$E$-module corresponding to $M$. Then by Lemma \ref{relation}(a),
$N$ has a Stanley decomposition
$$N=\Dirsum_{i=1}^t n_iK\langle A_i\rangle$$ where
$n_i=\theta(m_i)$  and $G_i=\supp(A_i)$. Hence by Proposition
\ref{Edual}, $N^\vee$ has a decomposition
$N^\vee=\Dirsum_{i=1}^tb_iK\langle A_i\rangle$ with
$\supp(b_i)=[n]\setminus (G_i\cup F_i)$. Therefore by Lemma
\ref{relation}(b),  $M^\vee$ the corresponding squarefree
$S$-module to $N^\vee$ has a decomposition as required.
\end{proof}

Associated to any finitely generated $\NN^n$-graded $S$-module $M$
is a {\em minimal free $\ZZ^n$-graded resolution}
\[
0\to\Dirsum_jS(-\mathbf{a}_j)^{\beta_{r,j}(M)}\to\cdots\to
\Dirsum_jS(-\mathbf{a}_j)^{\beta_{1,j}(M)}\to\Dirsum_jS(-\mathbf{a}_j)^{\beta_{0,j}(M)}\to
 0
\]
 where $S(-\mathbf{a}_j)$ denote the $\ZZ^n$-graded $S$-module obtained by
 shifting the degrees of $S$ by $\mathbf{a}_j$. The number $\beta_{i,j}(M)$
 is the {\em $ij$-th graded Betti number} of $M$. The regularity of
 $M$ is
 \[
 \reg(M)=\max\{|\mathbf{a}_j|-i\:\text {for all}\; i,j\}.
 \]

 Let $M$ be a
squarefree $\NN^n$-graded $S$-module. If Stanley's conjecture
holds for $M$, then by Theorem \ref{sqdepth} we may assume that
there exists a squarefree Stanley decomposition
$M=\Dirsum_{i=1}^tm_iK[Z_i]$ of $M$ such that $|Z_i|\geq
\depth(M)$. Also by Theorem \ref{dual} there exists a squarefree
Stanley decomposition $M^\vee=\Dirsum_{i=1}^tv_iK[W_i]$ of the
Alexander dual of $M$ such that $|\deg(v_i)|=n-|Z_i|\leq
n-\depth(M)$. On the other hand $\projdim(M)=\reg(M^\vee)$, see
\cite[Corollary 3.7]{T}.
 Since $\depth(M)+\projdim(M)=~n$, see
 \cite[Theorem 1.3.3]{BH}, we have $|\deg(v_i)|\leq\reg(M^\vee)$ for all $i$. Therefore
we will get the following:

\begin{Corollary}
\label{1} Let $M$ be a squarefree $\NN^n$-graded $S$-module and
$M^\vee$ its Alexander dual. Then Stanley's conjecture holds for
$M$ if and only if $M^\vee$ has a squarefree Stanley decomposition
$M^\vee=\Dirsum_{i=1}^tv_iK[W_i]$
 with
$|\deg(v_i)|\leq\reg(M^\vee)$ for all $i$.
\end{Corollary}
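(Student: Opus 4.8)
The plan is to make explicit the chain of equalities and inequalities already assembled in the paragraph preceding the corollary, and then to supply the small extra argument needed for the converse direction. For the forward implication, I would start by assuming Stanley's conjecture holds for $M$, so that $M$ admits a Stanley decomposition with $\sdepth(M)\geq\depth(M)$. By Theorem \ref{sqdepth} we have $\sqdepth(M)=\sdepth(M)$, hence $M$ admits a \emph{squarefree} Stanley decomposition $M=\Dirsum_{i=1}^t m_iK[Z_i]$ with $|Z_i|\geq\depth(M)$ for all $i$. Applying Theorem \ref{dual}, $M^\vee$ has a squarefree Stanley decomposition $M^\vee=\Dirsum_{i=1}^t v_iK[W_i]$ with $\supp(v_i)=[n]\setminus\supp(Z_i)$, so that $|\deg(v_i)|=n-|Z_i|\leq n-\depth(M)$. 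Finally I would invoke $\projdim(M)=\reg(M^\vee)$ from \cite[Corollary 3.7]{T} together with the Auslander--Buchsbaum formula $\depth(M)+\projdim(M)=n$ from \cite[Theorem 1.3.3]{BH} to rewrite $n-\depth(M)=\projdim(M)=\reg(M^\vee)$, giving $|\deg(v_i)|\leq\reg(M^\vee)$ for all $i$.

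For the converse, suppose $M^\vee$ has a squarefree Stanley decomposition $M^\vee=\Dirsum_{i=1}^t v_iK[W_i]$ with $|\deg(v_i)|\leq\reg(M^\vee)$ for all $i$. Since $(M^\vee)^\vee=M$, applying Theorem \ref{dual} to $M^\vee$ in place of $M$ produces a squarefree Stanley decomposition $M=\Dirsum_{i=1}^t m_iK[Z_i]$ in which $|Z_i|=n-|\deg(v_i)|\geq n-\reg(M^\vee)$. Using $\reg(M^\vee)=\projdim(M)$ and $\projdim(M)=n-\depth(M)$ again, this reads $|Z_i|\geq\depth(M)$, so $\sdepth(M)\geq\depth(M)$ and Stanley's conjecture holds for $M$. (Here one should note that Theorem \ref{dual} is genuinely an ``if and only if'' with a symmetric conclusion, so that the roles of $M$ and $M^\vee$, and of $Z_i$ and $W_i$, may be interchanged; the compatibility conditions $\supp(v_i)=[n]\setminus\supp(Z_i)$ and $W_i=\{x_j:j\in[n]\setminus\supp(m_i)\}$ ensure the bijection of decompositions is an involution up to relabeling.)

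The only genuine point requiring a small amount of care---and the step I would flag as the main obstacle---is that $\projdim$, $\reg$, and $\depth$ are invariants that a priori see only the $\NN$-graded or ungraded structure, while the quantity $|Z_i|$ coming out of Theorem \ref{dual} and Theorem \ref{sqdepth} lives at the level of the $\NN^n$-grading; one must make sure that the Stanley depth extracted (respectively needed) really is $\max_{\mathcal D}\sdepth(\mathcal D)$ and that Theorem \ref{sqdepth} legitimately lets us restrict to squarefree decompositions without loss. Both of these are handled by the results already in the excerpt: Theorem \ref{sqdepth} gives $\sqdepth(M)=\sdepth(M)$ for squarefree $M$, and $M^\vee$ is squarefree by construction, so the argument is symmetric. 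With those invoked, everything else is the bookkeeping of the two displayed identities $\projdim(M)=\reg(M^\vee)$ and $\depth(M)+\projdim(M)=n$, and the corollary follows.
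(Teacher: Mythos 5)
Your argument is exactly the paper's: the forward direction is the chain $|\deg(v_i)|=n-|Z_i|\le n-\depth(M)=\projdim(M)=\reg(M^\vee)$ via Theorem \ref{sqdepth}, Theorem \ref{dual}, Terai's equality and Auslander--Buchsbaum, which is precisely the paragraph preceding the corollary in the paper. Your explicit treatment of the converse (running the same identities backwards through the symmetric form of Theorem \ref{dual}) is correct and merely spells out what the paper leaves implicit.
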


In the case that $I\subset S$ is a monomial ideal and  $M=S/I$ or
$M=I$, then we may consider the standard grading for $S$ and $M$
by setting $\deg(x_i)=1$ for $i=1,\ldots,n$. In this case a
minimal graded free resolution of $I$ is
\[
 0\to\Dirsum_jS(-j)^{\beta_{r,j}(M)}\to\cdots\to\Dirsum_jS(-j)^{\beta_{1,j}(M)}
 \to\Dirsum_jS(-j)^{\beta_{0,j}(M)}\to
 I\to 0.
 \]
Suppose that all  monomial minimal generators of $I$ are of degree
$d$. Then $I$ has a {\em linear resolution} if for all $i\geq 0$,
$\beta_{i,j}=0$ for all $j\neq i+d$.

Let $F\subset G\subset [n]$. We denote the  interval $\{H \:
F\subseteq H\subseteq G\}$  by $[F,G]$. A partition $\mathbf
P:\;\Delta=\bigcup_{i=1}^t[F_i,G_i]$ of $\Delta$ is a disjoint
union of intervals of $\Delta$. A simplicial complex $\Delta$ is
called partitionable if there is a partition  $\mathbf
P:\;\Delta=\bigcup_{i=1}^t[F_i,G_i]$ of $\Delta$ such that
$\{G_1,\ldots,G_t\}$ is the set of  facets of $\Delta$. In \cite
{St1}  Stanley conjectured that any Cohen-Macaulay simplicial
complex is partitionable, see also \cite{St2}. In \cite[Corollary
3.5]{HSY} it was shown that this conjecture is a special case of
Stanley's conjecture on Stanley decompositions. Indeed, the
authors proved that if $\mathbf
P:\;\Delta=\bigcup_{i=1}^t[F_i,G_i]$ is a partition of $\Delta$,
then $\mathcal D(\mathbf P):\;S/I_\Delta=\Dirsum
x_{F_i}K[Z_{G_i}]$ is a squarefree Stanley decomposition of
$S/I_\Delta$, where $x_{F_i}=\prod_{j\in F_i}x_j$ and
$Z_{G_i}=\{x_j\: j\in G_i\}$. Hence we get the following
corollary.

\begin{Corollary} \label {pl}
A Cohen-Macaulay simplicial complex $\Delta$ is partitionable if
and only if $I_{\Delta^\vee}$ has a squarefree Stanley
decomposition $I_{\Delta^\vee}=\Dirsum_{i=1}^tu_iK[Z_i]$ such that
$\{u_i,\ldots,u_t\}=G(I_{\Delta^\vee})$.
\end{Corollary}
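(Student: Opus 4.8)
The plan is to combine three facts that are already in hand: the translation between partitions of $\Delta$ and squarefree Stanley decompositions of $S/I_\Delta$ (from \cite[Corollary 3.5]{HSY}, as recalled just above the statement), the fact that a Cohen--Macaulay complex $\Delta$ is partitionable exactly when $S/I_\Delta$ admits such a decomposition with $\sdepth$ equal to $\dim S/I_\Delta = \depth S/I_\Delta$, and the Alexander-duality correspondence of Theorem \ref{dual} applied with $M = S/I_\Delta$, so that $M^\vee = I_{\Delta^\vee}$. Concretely, I would first observe that a partition $\mathbf P:\Delta=\bigcup_{i=1}^t[F_i,G_i]$ with $\{G_1,\dots,G_t\}$ the facet set yields $\mathcal D(\mathbf P):\ S/I_\Delta=\Dirsum_{i=1}^t x_{F_i}K[Z_{G_i}]$, a squarefree Stanley decomposition in which every Stanley space has dimension $|G_i|=\dim\Delta+1=\depth(S/I_\Delta)$ (using Cohen--Macaulayness). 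Conversely, any squarefree Stanley decomposition of $S/I_\Delta$ in which each $|Z_i|=\depth(S/I_\Delta)$ gives back a partition of $\Delta$ into intervals whose maximal faces are facets, again by \cite[Corollary 3.5]{HSY}; here one uses that the number of Stanley spaces of maximal dimension is a decomposition invariant equal to the number of facets of $\Delta$.

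Next I would push this decomposition through Theorem \ref{dual}. Writing $M=S/I_\Delta$ and applying the theorem to the squarefree Stanley decomposition $M=\Dirsum_{i=1}^t x_{F_i}K[Z_{G_i}]$, we get a squarefree Stanley decomposition
\[
M^\vee = I_{\Delta^\vee} = \Dirsum_{i=1}^t v_iK[W_i]
\]
with $\supp(v_i)=[n]\setminus\supp(Z_{G_i})=[n]\setminus G_i=G_i^c$ and $W_i=\{x_j\: j\in[n]\setminus\supp(x_{F_i})\}=\{x_j\: j\in[n]\setminus F_i\}$. Since the generator $m_i=x_{F_i}$ in $M$ is squarefree, $v_i$ is the squarefree monomial $v_i=x_{G_i^c}$. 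The point is now to identify $\{v_1,\dots,v_t\}$ with the minimal monomial generating set $G(I_{\Delta^\vee})$: the generators of $I_{\Delta^\vee}$ are exactly the $x_{G^c}$ as $G$ ranges over the facets of $\Delta$ (this is the standard description of the Stanley--Reisner ideal of the Alexander dual in terms of the minimal nonfaces of $\Delta^\vee$, equivalently the complements of the facets of $\Delta$), and $\{G_1,\dots,G_t\}$ is precisely the facet set of $\Delta$. So $\{v_i\}=\{x_{G_i^c}\}=G(I_{\Delta^\vee})$, giving one implication. For the reverse implication, a squarefree Stanley decomposition $I_{\Delta^\vee}=\Dirsum_{i=1}^t u_iK[Z_i]$ with $\{u_1,\dots,u_t\}=G(I_{\Delta^\vee})$ has each $u_i$ of the form $x_{G_i^c}$ for a facet $G_i$ of $\Delta$, hence $|\deg u_i|=n-|G_i|=n-\depth(S/I_\Delta)=\projdim(I_{\Delta^\vee})=\reg\bigl((I_{\Delta^\vee})^\vee\bigr)$ in the standard grading; feeding this back through Theorem \ref{dual} (applied now to $M^\vee=I_{\Delta^\vee}$, using $(M^\vee)^\vee=M$) produces a squarefree Stanley decomposition of $S/I_\Delta$ all of whose Stanley spaces have dimension $\depth(S/I_\Delta)$, hence a partition of $\Delta$ witnessing partitionability.

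The main obstacle I anticipate is bookkeeping at the level of supports and degrees, not any conceptual difficulty: one must be careful that Theorem \ref{dual} relates $\supp(v_i)$ to $[n]\setminus\supp(Z_i)$ (the complement of the support of the \emph{whole} Stanley space, not of the generator) and $W_i$ to the complement of $\supp(m_i)$, and that in the partitionable case $\supp(Z_{G_i})=G_i$ because $F_i\subseteq G_i$. A secondary point requiring care is the passage between $\NN^n$-graded data and the standard $\ZZ$-grading in order to invoke $\projdim(S/I_\Delta)=\reg(I_{\Delta^\vee})$ (\cite[Corollary 3.7]{T}) together with the Auslander--Buchsbaum formula $\depth+\projdim=n$ (\cite[Theorem 1.3.3]{BH}); this is exactly the computation already carried out in the paragraph preceding Corollary \ref{1}, so it can be cited rather than repeated. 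Once these identifications are in place the corollary follows immediately from Theorem \ref{dual} and \cite[Corollary 3.5]{HSY}.
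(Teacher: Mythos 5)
Your proof is correct, but it reaches the key identification $\{u_1,\ldots,u_t\}=G(I_{\Delta^\vee})$ by a different mechanism than the paper. The paper's proof is purely numerical: it invokes Corollary \ref{1} (Stanley's conjecture for $S/I_\Delta$ is equivalent to $I_{\Delta^\vee}$ admitting a squarefree Stanley decomposition with $|\deg u_i|\leq\reg(I_{\Delta^\vee})$), then uses Eagon--Reiner to conclude that $I_{\Delta^\vee}$ has a $d$-linear resolution, so $\reg(I_{\Delta^\vee})=d$; combined with the trivial bound $\deg u_i\geq d$ this forces each $u_i$ to be a minimal generator. You instead apply Theorem \ref{dual} directly to the decomposition $S/I_\Delta=\Dirsum x_{F_i}K[Z_{G_i}]$ coming from a partition, read off $\supp(v_i)=G_i^c$ from the support formula, and identify the $v_i$ with the minimal generators via the combinatorial fact $G(I_{\Delta^\vee})=\{x_{G^c}\:G\text{ a facet of }\Delta\}$. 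Your route is more explicit (it exhibits the bijection between intervals of the partition and generators of $I_{\Delta^\vee}$) and does not need Eagon--Reiner in the forward direction, only purity of $\Delta$; the paper's route is shorter given that Corollary \ref{1} is already on record. Two small points: in your backward direction the chain $n-\depth(S/I_\Delta)=\projdim(I_{\Delta^\vee})=\reg\bigl((I_{\Delta^\vee})^\vee\bigr)$ contains slips (it should read $n-\depth(S/I_\Delta)=\projdim(S/I_\Delta)=\reg(I_{\Delta^\vee})$ by Auslander--Buchsbaum and Terai), but this computation is in fact dispensable, since $|W_i|=|G_i|=\dim\Delta+1=\depth(S/I_\Delta)$ already follows from Theorem \ref{dual} and the purity of the Cohen--Macaulay complex $\Delta$; and the remark about the number of Stanley spaces of maximal dimension being a decomposition invariant is likewise not needed.
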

\begin{proof} By Eagon-Reiner \cite{ER} $\Delta$ is
Cohen-Macaulay if and only if  $I_{\Delta^\vee}$ has a linear
resolution. Also by a result of Terai \cite{T}
$\projdim(S/I_\Delta)=\reg(I_{\Delta^\vee})$ for any simplicial
complex $\Delta$.

On the other hand by  Corollary \ref{1} the Cohen-Macaulay
simplicial complex $\Delta$ is partitionable if and only if
$I_{\Delta^\vee}$ has a squarefree Stanley decomposition
$I_{\Delta^\vee}=\Dirsum_{i=1}^tu_iK[Z_i]$ such that $\deg u_i\leq
\reg(I_{\Delta^\vee})= d$, where $d$ is the degree of any minimal
monomial generator of $I_{\Delta^\vee}$. Since $u_i\in
I_{\Delta^\vee}$, one has $\deg(u_i)\geq d$ for all $i$. This
shows that $u_i\in G(I_{\Delta^\vee})$ and hence
$\{u_i,\ldots,u_t\}\subset G(I_{\Delta^\vee})$. The other
inclusion is obvious.
\end{proof}

 Corollary \ref{pl} shows
that Stanley's conjecture which says that any Cohen-Macaulay
simplicial complex is patitionable  is equivalent to say that any
squarefree monomial ideal  $I\subset S$  which has a linear
resolution has  a  Stanley decomposition
$I=\Dirsum_{i=1}^tu_iK[Z_i]$ such that $\{u_1,\ldots,u_t\}=G(I)$.

This results lead us to  make the following conjecture which in
the case of squarefree $\NN^n$-graded $S$-module is equivalent to
Stanley's conjecture on Stanley decompositions.

\begin{Conjecture}
\label{reg}
 Let $S=K[x_1,\ldots,x_n]$,
 and let $M$ be a finitely generated $\ZZ^n$-graded $S$-module.
Then there exists a Stanley decomposition
$$M=\Dirsum_{i=1}^t m_iK[Z_i],$$
of $M$, where $| m_i|\leq\reg M$ for all $i$.
\end{Conjecture}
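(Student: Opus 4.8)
The plan is not to settle Conjecture~\ref{reg} in full — for squarefree modules it already contains Stanley's conjecture, as noted above, and so is at least as hard — but to prove it in the cases one can reach and to make transparent where the general argument stalls.

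\emph{Reduction to Stanley's conjecture.} For $M=I$ or $M=S/I$ with $I$ a monomial ideal, pass to the polarization $I^{\mathrm{pol}}$, a squarefree monomial ideal in a larger polynomial ring with $\reg I^{\mathrm{pol}}=\reg I$; by the familiar behaviour of Stanley decompositions under polarization, a Stanley decomposition of $I^{\mathrm{pol}}$ with generator degrees bounded by its regularity specializes back to one of $I$ with the same bound. So for ideals and their quotients it suffices to treat the squarefree case, where — as observed above — Conjecture~\ref{reg} for $M$ is equivalent, via Corollary~\ref{1} applied to $M^\vee$ (using $(M^\vee)^\vee=M$ and Theorem~\ref{sqdepth}), to Stanley's conjecture for $M^\vee$. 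Hence Conjecture~\ref{reg} holds for every squarefree module, and for every $I$ and $S/I$ with $I$ monomial, in all the cases in which Stanley's conjecture is known: for small $n$, for $\dim M\le 1$, for small $\codim M$, and the like.

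\emph{An inductive mechanism for $M=S/I$.} Choose a variable $x_n$ dividing a minimal generator of $I$, so that $(I:x_n)\supsetneq I$ (the case of a prime ideal being the base), and use the exact sequence
\[
0\to S/(I:x_n)(-\epsilon_n)\xrightarrow{\,x_n\,}S/I\to S/(I,x_n)\to 0 .
\]
A double induction — on the number of variables for $S/(I,x_n)$, which lives in $K[x_1,\dots,x_{n-1}]$, and on the size of $I$ for $S/(I:x_n)$ — supplies Stanley decompositions $S/(I,x_n)=\Dirsum_j p_jK[Y_j]$ with $Y_j\subset\{x_1,\dots,x_{n-1}\}$ and $|p_j|\le\reg\bigl(S/(I,x_n)\bigr)$, and $S/(I:x_n)=\Dirsum_i m_iK[Z_i]$ with $|m_i|\le\reg\bigl(S/(I:x_n)\bigr)$. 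Since the monomials in $p_jK[Y_j]$ are not divisible by $x_n$ while those in $(x_nm_i)K[Z_i]$ are, the union $S/I=\Dirsum_j p_jK[Y_j]\oplus\Dirsum_i(x_nm_i)K[Z_i]$ is again a Stanley decomposition. Bounding its generator degrees by $\reg(S/I)$ needs two inputs: $\reg\bigl(S/(I,x_n)\bigr)\le\reg(S/I)$, which is classical for monomial ideals, and $\reg\bigl(S/(I:x_n)\bigr)<\reg(S/I)$ — and it is the second that is the crux.

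\emph{The obstacle.} Multiplying a generator $m_i$ by $x_n$ raises its degree by one, so only $|x_nm_i|\le\reg\bigl(S/(I:x_n)\bigr)+1$ is available a priori, which exceeds $\reg(S/I)$ unless the regularity drops strictly along the colon; and although $\reg\bigl(S/(I:x_n)\bigr)\le\reg(S/I)$ always holds, it is not clear that the strict inequality can be forced by a good choice of $x_n$. One can force it, and so complete the induction, for ideals of small projective dimension, for ideals with few generators, and in similar situations. For a module $M$ not of the form $S/I$ or $I$ there is the additional difficulty that, with no monomial disjointness available, the two decompositions need not splice automatically. By the polarization reduction above, removing these obstacles in general is precisely as hard as Stanley's conjecture, which is why we record Conjecture~\ref{reg} as a conjecture and verify it only in special cases.
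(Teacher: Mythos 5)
The statement you are asked about is a \emph{conjecture}: the paper does not prove it, and explicitly records it as open, establishing only special cases (the equivalence with Stanley's conjecture for squarefree modules via Corollary~\ref{1} and Theorem~\ref{dual}, the cases $n\le 3$, $\reg(S/I)\ge n-2$, $\reg(I)=2$, ideals with linear quotients via Corollary~\ref{linear1} and Proposition~\ref{nStanley}, and the two-variable lemma). You correctly recognize this and do not claim a proof, so there is no complete argument here to be wrong; what remains is to assess your partial reductions. Your first reduction --- that for squarefree $M$ the conjecture is equivalent to Stanley's conjecture for $M^\vee$ --- is exactly the paper's own observation, though note a small imprecision: Corollary~\ref{1} speaks of \emph{squarefree} Stanley decompositions, and Theorem~\ref{sqdepth}, which you cite to bridge the gap, controls $\sdepth$, not the generator degrees $|m_i|$; one needs (and the paper implicitly uses, via Theorem~\ref{dual}) that for squarefree modules one may restrict to squarefree decompositions without worsening the degree bound. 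Your polarization step is also asserted rather than proved: depolarizing a Stanley decomposition is more delicate than depolarizing a prime filtration, and while $\reg(I^{\mathrm{pol}})=\reg(I)$ is standard, the claim that a degree-bounded decomposition "specializes back" with the same bound would need an argument.

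Your inductive mechanism via $0\to S/(I:x_n)(-\epsilon_n)\to S/I\to S/(I,x_n)\to 0$ is a genuinely different route from anything in the paper (which instead exploits linear quotients and prime filtrations), and the splicing of the two decompositions along $x_n$-divisibility is sound. You have also put your finger on the real obstruction: the degree of $x_nm_i$ exceeds $\reg(S/I)$ unless $\reg\bigl(S/(I:x_n)\bigr)<\reg(S/I)$ strictly, and no choice of $x_n$ guarantees this. The inequalities $\reg(S/(I,x_n))\le\reg(S/I)$ and $\reg(S/(I:x_n))\le\reg(S/I)$ for monomial ideals, which you call classical, do hold but are themselves nontrivial and should be referenced or proved if this sketch were to be developed. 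In summary: the proposal is an honest and largely correct account of why the statement is conjectural, overlapping with the paper on the Alexander-duality reduction and adding an inductive scheme the paper does not contain; it neither proves the conjecture nor pretends to.
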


Let $\mathcal D$  be a Stanley decomposition of $M$.  We call the maximal
  $|m_i|$ in  $\mathcal D$  the {\em $h$-regularity
} of $\mathcal D$, and denote it by  $\hreg ({\mathcal D})$. Maclagan
and Smith \cite[Remark 4.2]{MS} proved that $\hreg({\mathcal
D})\geq \reg(M)$ in the case that $M=S/I$ , where $I$ is a monomial
ideal, and $\mathcal D$ is a Stanley filtration. We set $
\hreg(M)=\min\{\hreg({\mathcal D})\: {\mathcal D}\; \text{is a
Stanley decomposition of $M$}\}, $
 and call this number
the {\em $h$-regularity } of $M$.
With the  notation introduced  our conjecture says that
$\hreg(M)\leq \reg(M)$.

Let $M$ be a finitely generated $\NN^n$-graded $S$-module which is
generated by homogeneous elements $n_1,\ldots,n_s$. It is clear
that $|n_i|\leq\reg(M)$ for $i=1,\ldots,s$. We want to show that
$|n_i|\leq\hreg(M)$ for $i=1,\ldots,s$. Let $\mathcal
D=\Dirsum_{i=1}^tm_iK[Z_i]$ be a Stanley decomposition of $M$ such
that $\hreg(\mathcal D)=\hreg(M)$, and
$|n_r|=\max\{|n_i|\:i=1,\ldots,s\}$. Since $n_r\in M$ is a
homogeneous element, there exists a $j\in[t]$ such that $n_r\in
m_jK[Z_j]$. On the other hand $m_j\in M$ and $n_r$ is a generator.
 Therefore we have $m_j=n_r$ and
 $|n_r|=|m_j|\leq \hreg(\mathcal D)$.

Let $I\subset S=K[x_1,\ldots,x_n]$ be a monomial ideal. Apel
\cite{Ap1} proved that if $\depth(I)\leq 2$ or $n\leq 3$, then
Stanley's conjecture holds for $I$. Also if $n\leq 3$, then
Stanley's conjecture holds for $S/I$, see \cite{Ap2} or \cite{So}.
Furthermore in \cite{HSY} the authors showed that Stanley'
conjecture holds for $S/I$ if $I$ is a complete intersection,
$S/I$ is Cohen--Macaulay of codimension 2, or $S/I$ is Gorenstein of
codimension 3. If $I=I_\Delta$ is a squarefree monomial ideal,
then $\projdim(I_\Delta)=\reg(S/I_{\Delta^\vee})$. The discussions above
together with  Corollary \ref{1} yield the following:

\begin{Corollary} Let $I\subset S=K[x_1,\ldots,x_n]$ be a squarefree monomial ideal.
Then
\begin{enumerate}
\item[(i)]  Conjecture \ref{reg} holds for $I$
and for $S/I$ if $n\leq 3$;
\item[(ii)]   Conjecture \ref{reg}
holds for $S/I$ if $\reg(S/I)\geq n-2$;
\item[(iii)] Conjecture \ref{reg} holds for $I$ if $\reg(I)=2$.
\end{enumerate}
\end{Corollary}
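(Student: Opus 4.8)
The plan is to derive each of the three parts from Corollary \ref{1} together with the known partial results on Stanley's conjecture quoted just above, using the identity $\projdim(I_\Delta)=\reg(S/I_{\Delta^\vee})$ and the companion identity $\projdim(S/I_\Delta)=\reg(I_{\Delta^\vee})$ due to Terai \cite{T}. Throughout, write $\Delta$ for the simplicial complex with $I=I_\Delta$, so that $I^\vee=S/I_{\Delta^\vee}$ and $(S/I)^\vee=I_{\Delta^\vee}$, and recall that for squarefree modules Conjecture \ref{reg} is equivalent to Stanley's conjecture (Corollary \ref{1}).

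For (i): if $n\le 3$ then by Apel \cite{Ap1}, \cite{Ap2} (see also \cite{So}) Stanley's conjecture holds both for $I$ and for $S/I$; since $I$ and $S/I$ are squarefree $S$-modules, Corollary \ref{1} applied to $M=I$ and to $M=S/I$ (equivalently, the equivalence of Conjecture \ref{reg} with Stanley's conjecture for squarefree modules) gives Conjecture \ref{reg} for both. For (ii): assume $\reg(S/I)\ge n-2$. By Terai, $\reg(S/I)=\projdim(I_{\Delta^\vee})$, hence $\projdim(I_{\Delta^\vee})\ge n-2$, so $\depth(I_{\Delta^\vee})=n-\projdim(I_{\Delta^\vee})\le 2$ (using $\depth+\projdim=n$, \cite[Theorem 1.3.3]{BH}). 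By Apel's theorem \cite{Ap1}, Stanley's conjecture holds for a monomial ideal of depth $\le 2$, so it holds for $I_{\Delta^\vee}$; since $(S/I)^\vee=I_{\Delta^\vee}$, Corollary \ref{1} (read in the direction: Stanley's conjecture for $(S/I)^\vee$ implies Conjecture \ref{reg} for $S/I$) gives Conjecture \ref{reg} for $S/I$. For (iii): assume $\reg(I)=2$, i.e.\ $I$ has a linear resolution generated in degree $2$ (or $I$ is already a bit simpler, but in any case $\reg(I)=2$). By Eagon--Reiner \cite{ER} together with Terai, $I=I_\Delta$ has a linear resolution exactly when the relevant Alexander dual module is Cohen--Macaulay, and more to the point $\reg(I)=\projdim(S/I^{\vee\text{-type}})$; the clean way is: $I^\vee=S/I_{\Delta^\vee}$ and $\projdim(I^\vee)=\reg(I)=2$ by \cite[Corollary 3.7]{T}, hence $\depth(S/I_{\Delta^\vee})=n-2$, so $S/I_{\Delta^\vee}$ is Cohen--Macaulay of codimension $2$. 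By \cite{HSY}, Stanley's conjecture holds for $S/I_{\Delta^\vee}$; since $I^\vee=S/I_{\Delta^\vee}$, Corollary \ref{1} again yields Conjecture \ref{reg} for $I$.

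The routine parts are the bookkeeping with Terai's and Auslander--Buchsbaum's formulas and checking that the hypotheses of the quoted theorems ($n\le 3$, depth $\le 2$, Cohen--Macaulay codimension $2$) are really met by the relevant Alexander dual module. The one point that needs a little care is the direction in which Corollary \ref{1} is applied: Corollary \ref{1} is stated as an equivalence between Stanley's conjecture for $M$ and the existence of a nice squarefree Stanley decomposition of $M^\vee$; here I instead want, for a squarefree module $M$, that Stanley's conjecture for $M^\vee$ implies Conjecture \ref{reg} for $M$. This follows because Conjecture \ref{reg} for a squarefree module $M$ is equivalent to Stanley's conjecture for $M$ (as noted right before Conjecture \ref{reg}), and then applying the equivalence of Corollary \ref{1} with $M$ replaced by $M^\vee$ and using $(M^\vee)^\vee=M$. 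So the main obstacle is purely organizational — lining up which module plays the role of "$M$" in each invocation — rather than any substantive new argument.
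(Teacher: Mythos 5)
Your treatment of (i) and (ii) is correct and follows the route the paper intends (the paper offers no explicit proof beyond ``the discussions above together with Corollary~\ref{1}''): for (ii) the chain $\reg(S/I)=\projdim(I_{\Delta^\vee})\ge n-2$, hence $\depth(I_{\Delta^\vee})=n-\projdim(I_{\Delta^\vee})\le 2$, then Apel's theorem for ideals of depth at most $2$, then Corollary~\ref{1} applied to $M=(S/I)^\vee$, is exactly the right bookkeeping; and your closing remark correctly resolves the direction in which Corollary~\ref{1} must be read (Stanley's conjecture for $M^\vee$ gives Conjecture~\ref{reg} for $M$ by applying the corollary to $M^\vee$ and using $(M^\vee)^\vee=M$).

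Part (iii), however, contains a genuine gap. From $\projdim(S/I_{\Delta^\vee})=\reg(I)=2$ you conclude that $S/I_{\Delta^\vee}$ is Cohen--Macaulay of codimension $2$. But $\projdim(S/J)=2$ only yields $\depth(S/J)=n-2$ and $\codim(S/J)\le 2$; it does not yield $\dim(S/J)=n-2$. Concretely, take $I=(x_1,x_2x_3)\subset K[x_1,\dots,x_4]$. Then $\reg(I)=2$, while $I_{\Delta^\vee}=(x_1x_2,x_1x_3)$ has height $1$, so $S/I_{\Delta^\vee}$ has dimension $3$ and depth $2$ and is not Cohen--Macaulay; the result of \cite{HSY} on Cohen--Macaulay quotients of codimension $2$ therefore does not apply. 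The failure occurs precisely when $I$ contains a variable, i.e.\ when $\Delta^\vee$ has a facet of cardinality $n-1$. To repair the argument you need to (a) use Eagon--Reiner honestly: if $I$ has no degree-one generators, then $\reg(I)=2$ forces $I$ to be generated purely in degree $2$, hence to have a $2$-linear resolution, hence $S/I_{\Delta^\vee}$ is Cohen--Macaulay by \cite{ER}, and only then does $\projdim=2$ give codimension $2$; and (b) dispose of the remaining case of degree-one generators separately, for instance by noting that $(x_{i_1},\dots,x_{i_k})+I'$ with $I'$ having a $2$-linear resolution in the other variables has linear quotients, so that Corollary~\ref{linear2} applies directly. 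You do mention Eagon--Reiner, but your ``clean way'' then discards it and rests on the false inference from depth alone.
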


Let $I=(u_1,\ldots,u_m)$ be a monomial ideal in $S$. According to
\cite {HT}, the monomial ideal $I$ has linear quotients if one can
order the set of minimal generators of $I$,
$G(I)=\{u_1,\ldots,u_m\}$, such that the ideal
$(u_1,\ldots,u_{i-1}):u_i$ is generated by a subset of the
variables for $i=2,\ldots,m$.

Assume that $I=(u_1,\ldots,u_m)$ is a monomial ideal which has
linear quotients with respect to the given order. Set
$I_i=(u_1,\ldots,u_{i-1}):u_i$, $Z_i=X\setminus G(I_i)$ for
$i=2,\ldots,m$ and $Z_1=X$. We denote $r_i=|G(I_i)|$ for
 $i=2,\ldots,m$
  and $r(I)=\max\{r_i \:i=2,\ldots,s\}$.  By \cite[page 539]{HH} $\depth(I)=n-r(I)$.

\begin{Corollary}\label{linear1} Let $I\subset S$ be a monomial
ideal with linear quotients. Then  Stanley's conjecture on Stanley
decompositions holds for $I$.
\end{Corollary}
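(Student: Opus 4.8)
The plan is to write down, directly from the linear-quotient data, an explicit Stanley decomposition of $I$ and to check that its Stanley depth equals $\depth(I) = n - r(I)$; since Stanley's conjecture for $I$ asserts $\depth(I) \le \sdepth(I)$, this will suffice. The candidate decomposition is
\[
\mathcal D:\quad I = \Dirsum_{i=1}^m u_i K[Z_i],
\]
with $Z_i = X \setminus G(I_i)$ for $i \ge 2$ and $Z_1 = X$, exactly the data introduced before the statement.

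First I would verify that $\mathcal D$ is a Stanley decomposition. Each $u_i K[Z_i]$ is a Stanley space because $S$ is a domain, so $u_i v \neq 0$ for every nonzero monomial $v \in K[Z_i]$. The substance of the claim is therefore the direct-sum identity at the level of monomial $K$-bases, and for this I would use a ``first occurrence'' argument. Given a monomial $w \in I$, let $i$ be the least index with $u_i \mid w$ and write $w = u_i v$. If some variable $x_k \in G(I_i)$ divided $v$, then $x_k u_i \in (u_1,\dots,u_{i-1})$, so $u_j \mid x_k u_i \mid w$ for some $j < i$, contradicting minimality of $i$; hence $v \in K[Z_i]$, and every monomial of $I$ lies in some $u_i K[Z_i]$. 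For disjointness, if $u_i v = u_j v'$ with $i < j$, $v \in K[Z_i]$ and $v' \in K[Z_j]$ monomials, then $u_j v' = u_i v \in (u_1,\dots,u_{j-1})$ forces $v' \in (u_1,\dots,u_{j-1}):u_j = I_j$; since $I_j$ is generated by variables, $v'$ would be divisible by a variable of $G(I_j)$, contradicting $v' \in K[Z_j]$. Thus the monomials $u_i v$ with $v \in K[Z_i]$ a monomial and $1 \le i \le m$ form a $K$-basis of $I$, so $\mathcal D$ is a Stanley decomposition of $I$.

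Next I would compute $\sdepth(\mathcal D) = \min_{1 \le i \le m} |Z_i|$. One has $|Z_1| = n$, and for $i \ge 2$, $|Z_i| = n - |G(I_i)| = n - r_i \ge n - r(I)$, with equality for the index achieving $r(I)$. Hence $\sdepth(\mathcal D) = n - r(I)$, and by the quoted equality $\depth(I) = n - r(I)$ from \cite[page 539]{HH} this yields $\sdepth(\mathcal D) = \depth(I)$; therefore $\sdepth(I) \ge \depth(I)$ and Stanley's conjecture holds for $I$.

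The only genuine work is the direct-sum verification in the second step; everything else is bookkeeping. I expect the disjointness half of that verification — the passage using $v' \in I_j = (u_1,\dots,u_{j-1}):u_j$ together with the fact that $I_j$ is generated by variables — to be the single point requiring the most care.
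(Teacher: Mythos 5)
Your proof is correct and follows essentially the same route as the paper: it produces the identical decomposition $I=\Dirsum_{i=1}^m u_iK[Z_i]$ with $Z_i=X\setminus G(I_i)$ and concludes via $\depth(I)=n-r(I)$. The only difference is that you verify the direct-sum property by hand (the ``first occurrence'' and disjointness arguments), whereas the paper obtains it by viewing $(0)\subset(u_1)\subset\cdots\subset I$ as a prime filtration and invoking Proposition~\ref{nStanley}; your verification is in effect an unpacking of that citation.
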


\begin{proof}  Suppose $I=(u_1,\ldots,u_m)$ has linear
quotients with respect to the given order. Then  $\mathcal G :\;
(0)\subset J_1=(u_1)\subset \ldots\subset
J_{m-1}=(u_1,\ldots,u_{m-1})\subset J_m=I$ is a prime filtration
of $I$. Hence by  Proposition \ref{nStanley} $\mathcal D
=\Dirsum_{i=1}^su_iK[Z_i]$ is a Stanley decomposition of $I$ with
 $\sdepth(\mathcal D)=n-r(I)=\depth(I)$.
\end{proof}

In the decomposition above of $I$, all $u_i$ are the minimal
monomial generators of $I$. Therefore we have
\begin{Corollary} \label{linear2}If $I\subset S$ is a monomial ideal which has
linear quotient, then Conjecture \ref{reg} holds for $I$.
\end{Corollary}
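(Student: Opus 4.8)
The plan is to reuse verbatim the Stanley decomposition constructed in the proof of Corollary \ref{linear1} and observe that it already satisfies the degree bound demanded by Conjecture \ref{reg}.

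First I would recall the construction: if $I=(u_1,\ldots,u_m)$ has linear quotients with respect to the given order, then $\mathcal G:\ (0)\subset (u_1)\subset\cdots\subset(u_1,\ldots,u_{m-1})\subset (u_1,\ldots,u_m)=I$ is a prime filtration of $I$, the $i$-th quotient being isomorphic to $S/P_{F_i}(-\mathbf a_i)$ with $P_{F_i}=(u_1,\ldots,u_{i-1}):u_i$ generated by variables and $\mathbf a_i=\deg u_i$. By Proposition \ref{nStanley} the corresponding Stanley decomposition is $\mathcal D=\Dirsum_{i=1}^m u_iK[Z_i]$, where the homogeneous element generating the $i$-th Stanley space is the minimal monomial generator $u_i$ of $I$ itself (not merely a degree shift of it); this is exactly the remark recorded immediately before the statement.

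Next I would invoke the standard fact, already noted in the discussion preceding this corollary, that for a finitely generated $\NN^n$-graded $S$-module $M$ the degrees of any homogeneous minimal generating set are bounded above by $\reg(M)$: such generators are accounted for by the Betti numbers $\beta_{0,\mathbf a_j}(M)$, so $|\mathbf a_j|=|\mathbf a_j|-0\le\reg(M)$. Applying this to $M=I$ with the minimal monomial generators $u_1,\ldots,u_m$ yields $|u_i|\le\reg(I)$ for all $i$. Combining this with the previous paragraph, $\mathcal D=\Dirsum_{i=1}^m u_iK[Z_i]$ is a Stanley decomposition of $I$ in which every Stanley-space generator has degree at most $\reg(I)$, which is precisely the assertion of Conjecture \ref{reg} for $I$.

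I do not expect any genuine obstacle here; the argument is a one-line consequence of Corollary \ref{linear1} once one notices that its decomposition has generators equal to the $u_i$. The only point requiring a small amount of care is making sure that, under the linear-quotients hypothesis, the filtration quotients really are of the form $S/P_{F_i}(-\mathbf a_i)$ with $\mathbf a_i=\deg u_i$ and $P_{F_i}$ a monomial prime (so that Proposition \ref{nStanley} applies and produces the $u_i$ as the Stanley generators), but this is exactly what "linear quotients" provides.
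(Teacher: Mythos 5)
Your argument is correct and is exactly the paper's: the paper derives Corollary \ref{linear2} from the observation that the Stanley decomposition $\mathcal D=\Dirsum_{i=1}^m u_iK[Z_i]$ of Corollary \ref{linear1} has the minimal monomial generators $u_i$ as its Stanley-space generators, together with the earlier remark that minimal generators satisfy $|u_i|\leq\reg(I)$. No discrepancy to report.
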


In \cite{HHZ} it was shown that if $I$ is monomial ideal with
2-linear resolution, then $I$ has linear quotients. Therefore
Stanley's conjecture on Stanley decompositions and  Conjecture
\ref{reg} holds for such monomial ideals.


\begin{thebibliography}{1}

\bibitem{AAH} A.\ Aramova, L.\ L.\ Avramov, J.\ Herzog,
Resolutions of monomial ideals and cohomology over exterior
algebras. Trans.\ Amer.\ Soc.\, {\bf 352} (2000), 579--594.

\bibitem{AH} A.\ Aramova, J.\ Herzog,  Almost regular sequences and Betti numbers.
 Amer.\ J\. Math.\, {\bf 122}  (2000), 689--719.
\bibitem{AhP} S.\ Ahmad, D.\ Popescu, Sequentially Cohen-Macaulay monomial ideals of embedding dimension four.
  math.AC/0702569.

\bibitem{AnP} I.\ Anwar, D.\ Popescu, Stanley Conjecture in small embedding dimension.
  math.AC/0702728.

\bibitem{Ap1} J.\ Apel, On a conjecture of R.\ P.\ Stanley.
Part I-Monomial Ideals, J.\ of Alg.\ Comb.\, {\bf 17} (2003),
36--59.

\bibitem{Ap2} J.\ Apel, On a conjecture of R.\ P.\ Stanley.
Part II-Quotients Modulo Monomial Ideals, J.\ of Alg.\ Comb.\,
{\bf 17} (2003), 57--74.

\bibitem{BH} W.\ Bruns and J.\ Herzog, {\sl Cohen-Macaulay rings}. Cambridge University Press, Cambridge, 1993.

\bibitem{ER} J.\ Eagon and V.\ Reiner, Resolutions of Stanley--Reisner rings
and Alexander duality. J.\ Pure Appl.\ Algebra, {\bf 130} (1998),
265--275.

\bibitem{HHb} J.\ Herzog, T.\ Hibi, {\sl Monomials}. Book in
preparation.

\bibitem{HH} J.\ Herzog, T.\ Hibi, The depth of powers of an
ideal. J.\ Algebra, {\bf 291} (2005), 534--550.

\bibitem{HHZ} J.\ Herzog, T.\ Hibi, X.\ Zheng, Monomial ideals whose powers
have a linear resolution. Math.\ Scand.\, {\bf 95} (2004), no. 1,
23--32

\bibitem{HP} J.\ Herzog, D.\ Popescu,  Finite filtrations of modules and
shellable multicomplexes.  Manuscripta Math.\,  {\bf 121}  (2006),
no. 3, 385--410.

\bibitem{HSY} J.\ Herzog, A.\ Soleyman Jahan, S.\ Yassemi, Stanley decompositions and partitionable simplicial
complexes. to appear in J.\ Algebraic Combin.\ .

\bibitem{HT} J.\ Herzog, Y.\ Takayama,  Resolutions by mapping
cones. The Roos Festschrift volume, 2.  Homology Homotopy Appl.\,
{\bf 4} (2002), no. 2, part 2, 277--294.

\bibitem{Ma} H.\ Masumura, {\sl Commutative Ring theory}. Cambridge,
1986.

\bibitem{MS} D.\ Maclagan, G.\ G.\ Smith, Uniform bounds on
multigraded regularity. J.\ Algebraic Geometry, {\bf 14} (2005),
137--164.

\bibitem{NR} U.\ Nagel, T.\ R\"{o}mer, Glicci simplicial complexes. math.AC/0704.3283

\bibitem{R} T.\ R\"{o}mer, Generalized Alexander duality and applications. Osaka J.\ Math.\,  {\bf 38} (2001),
 no. 2, 469--485.

\bibitem{R1} T.\ R\"{o}mer, Cohen-Macaulayness and squarefree modules.  Manuscripta
Math.\, {\bf 104} (2001), no. 1, 39--48.

\bibitem{So}  A.\ Soleyman Jahan, Prime filtrations of monomial ideals and
polarizations. J.\ Algebra, {\bf 312} (2007), 1011--1032.

\bibitem{St} R.\ P.\ Stanley, Linear Diophantine equations and local cohomology. Invent.\
Math.\ {\bf 68}, (1982), 175--193.

\bibitem{St1} R.\ P.\ Stanley, {\sl Combinatorics and Commutative
Algebra}. Birkh\"auser, 1983.

\bibitem{St2} R.\ P.\ Stanley, Positivity Problems and
Conjectures in Algebraic Combinatorics, In Mathematics: Frontiers
and Perspectives (V. Arnold, M. Atiyah, P. Lax, and B. Mazur,
eds.), American Mathematical Society, Providence, RI, 2000, pp.
295-319.

\bibitem{SX} A.\ Soleyman Jahan, X.\ Zheng, Pretty clean monomial ideals and linear
quotients. math.AC/0707.2914.

\bibitem{T} N.\ Terai, Generalization of Eagon--Reiner theorem and $h$-vectors of graded rings. Preprint 2000.

\bibitem{Vi} R.\ H.\ Villarreal, {\sl Monomial Algebras}, Dekker,
NewYork, NY, 2001.

\bibitem{Y} K.\ Yanagawa, Alexander duality for Stanley-Reisner
rings and squarefree $\NN^n$-graded modules. J.\ Algebra, {\bf
225}, (2000), 630--645.


\end{thebibliography}
\end{document}